\newcommand{\eps}{\varepsilon}
\newcommand{\R}{\mathbb{R}}
\newcommand{\RN}{{\mathbb{R}^N}}
\renewcommand{\le}{\leqslant}
\renewcommand{\ge}{\geqslant}
\renewcommand{\a }{\alpha }
\renewcommand{\d }{\delta }
\newcommand{\g }{\gamma }
\newcommand{\n }{\nabla }
\newcommand{\s }{\sigma }
\renewcommand{\t}{\theta}
\renewcommand{\O}{\Omega}
\newcommand{\G}{\Gamma}
\renewcommand{\L}{\Lambda}
\newcommand{\W}{{\cal W}}
\newcommand{\N}{\mathbb{N}}
\newcommand{\D}{{\mathcal D}}
\newcommand{\Dpn }{{\mathcal D}^{1,p}}
\newcommand{\Dp }{{\mathcal D}^{1,p}(\RN)}
\newcommand{\Dq }{{\mathcal D}^{1,q}(\RN)}
\newcommand{\irn }{\int_{\RN}}
\def\bbm[#1]{\mbox{\boldmath $#1$}}
\newcommand{\beq }{\begin{equation}}
\newcommand{\eeq }{\end{equation}}
\newtheorem{theorem}{Theorem}[section]
\newtheorem{lemma}[theorem]{Lemma}
\newtheorem{definition}[theorem]{Definition}
\newtheorem{proposition}[theorem]{Proposition}
\newtheorem{remark}[theorem]{Remark}
\renewenvironment{proof}{\noindent{\textbf{Proof\quad}}}{$\hfill\square$\vspace{0.2 cm}\\}
\title{{\bf
On a functional satisfying a weak Palais-Smale condition
\footnote{The author is supported by M.I.U.R. - P.R.I.N. ``Metodi
variazionali e topologici nello studio di fenomeni non lineari''}}}
\author{A. Azzollini \thanks{Dipartimento di Matematica, Informatica ed Economia, Universit\`a degli
Studi della Basilicata,  Via dell'Ateneo Lucano 10, I-85100 Potenza,
Italy, e-mail: {\tt antonio.azzollini@unibas.it}}}
\date{}
\begin{document}

\maketitle

\begin{abstract}
In this paper we study a quasilinear elliptic problem whose
functional satisfies a weak version of the well known Palais-Smale
condition. An existence result is proved under general assumptions on the nonlinearities.
\end{abstract}

\section*{Introduction}
 The aim of this paper is to generalize a recent result
obtained in \cite{ADP} concerning the following quasilinear elliptic
problem
\begin{equation}\label{eq}
\left\{
\begin{array}{ll}
-\n \cdot \left[\phi'(|\n u|^2)\n u  \right] +|u|^{\a-2}u =f(u), &
x\in \RN,
\\
u(x) \to 0 , \quad \hbox{as }|x|\to \infty,
\end{array}
\right.
\end{equation}
where $N\ge 2$, $\phi(t)$ behaves like $t^{q/2}$ for small $t$ and
$t^{p/2}$ for large $t$,
 $1< p<q<\min\{p^*,N\}$, $1<\a\le p^* q'/p'$, being $p^*=\frac{pN}{N-p}$
 and $p'$ and $q'$ the conjugate exponents, respectively, of $p$ and
 $q$.\\
 In \cite{ADP} the authors have proved that if $f(t)=|t|^{s-2}t$ grows as
 $t$ goes to $+\infty$ more than $\max\{t^{a-1},t^{q-1}\}$ and less
 than $t^{p^*-1}$ and $\phi \in C^1(\R_+,\R_+)$ satisfies
 \begin{enumerate}[label=(${\rm \Phi}$\arabic*), ref=${\rm \Phi}$\arabic*]
\item \label{phi1} $\phi(0)=0$,
\item \label{phi2} there exists a positive constant $c$ such that
\[
\left\{
\begin{array}{ll}
c t^{\frac p2} \le \phi(t), & \hbox{if }t\ge 1,
\\
c t^{\frac q2} \le \phi(t), & \hbox{if }0\le t\le 1,
\end{array}
\right.
\]
\item \label{phi3} there exists a positive constant $C$ such that
\[
\left\{
\begin{array}{ll}
\phi(t) \le C t^{\frac p2}, & \hbox{if }t\ge 1,
\\
\phi(t) \le C t^{\frac q2}, & \hbox{if }0\le t\le 1,
\end{array}
\right.
\]
\item \label{phi4} the map $t \mapsto \phi(t^2)$ is strictly convex,
\item \label{phi5} there exists $0<\mu<1$ such that
\[
\phi'(t)t\le \frac{s\mu}{2}\phi(t), \quad \hbox{for all }t\ge 0,
\]
\end{enumerate}
 then the problem possesses infinitely many solutions. As remarked in that paper, the same result can be obtained
 if we assume more general hypotheses on the
 nonlinearity $f$. In particular, apart from the local assumptions related with the
 behaviour at 0 and at infinity, it is required the following global condition
\[
0<\t F(t)\le f(t)t, \quad \hbox{for all }t> 0,
\]
where $\t>\a$ and $F(t)=\int_0^tf(z)\,dz$. This assumption, known as the Ambrosetti-Rabinowitz
condition, (AR) in short, is quite classical in the field of
critical points theory and typically occurs when we try to prove the
boundedness of the
Palais-Smale sequences related with the functional of the action.\\
However, some papers have shown that there are many situations in
which (AR) can be successfully bypassed. In \cite{BL1}, for
instance, the equation
\begin{equation}
\left\{
\begin{array}{ll}    \label{eq:BL}
        -\Delta u = g(u)\\
u(x) \to 0 , \quad \hbox{as }|x|\to \infty,
\end{array}
\right.
\end{equation}
is solved without (AR) in two steps. First the authors reduce the
problem to that of minimizing a constrained (bounded below)
functional, obtaining a solution to the equation, up to a Lagrange
multiplier. Then they exploit the behaviour of the equation with
respect to the rescaling to make the Lagrange multiplier disappear.

More recently, in \cite{S} and \cite{J} it has been shown a method,
named {\it monotonicity trick}, which exploits the differentiability
a.e. of the monotone functions to get bounded Palais Smale sequences
for functionals related with approximating equations.\\ If there is
no problem of compactness, this method allows us to get a Palais
Smale sequence constituted by solutions of approximating equations.
Afterwards, getting some more information on the elements of the
sequence using the fact that they solve an equation, we could prove
the boundedness of the Palais-Smale sequence. Usually, the
additional information we look for is the well known Pohozaev
identity, an equality satisfied by sufficiently regular solutions of
elliptic equations in the divergence form.

In our situation, a different approach is required. Consider the
problem
\begin{equation}\label{eq2}
\left\{
\begin{array}{ll}
-\n \cdot \left[\phi'(|\n u|^2)\n u  \right]  =g(u), & x\in \RN,
\\
u(x) \to 0 , \quad \hbox{as }|x|\to \infty,
\end{array}
\right.
\end{equation}
where we will assume on $g$ hypotheses similar to those in
\cite{BL1}.

Observe that, since $\phi'$ is not homogeneous, we can not proceed
as in \cite{BL1}. On the other hand, also the monotonicity trick
does not seem to be of use. Indeed, since no regularity result on
the solutions of \eqref{eq2} is available, we can not obtain a Pohozaev identity in
a standard way. To overcome these difficulties, we use a result
contained in \cite{HIT}, where an alternative way of approaching
\eqref{eq:BL} is showed. The method
presented consists in adding a dimension to the space where the
problem is set, and constructing a Palais-Smale sequence for a
suitable auxiliary functional defined in this new space. Such a
technique, which we call {\it the adding dimension technique},
permits to get additional information on a Palais-Smale sequence of
the original functional and, possibly, to prove it is bounded. We remark the fact that this
method does not require any regularity assumption on the solutions of the equations.

It is worthy of note that, differently from the functional related
with \eqref{eq:BL}, the functional of the action associated with
\eqref{eq2} will be defined on a particular Orlicz-Sobolev space.
Treating with this space carries some more complications when we try
to solve \eqref{eq2} with a nonlinearity modeled on that in
\cite{BL1}. To explain better what we mean, we list our assumptions
on $g$ and state the main result.

Suppose that $g$ is a continuous function satisfying the following hypotheses
            \begin{enumerate}[label=(g\arabic*), ref=(g\arabic*)]
                \item\label{g1} $-\infty \le\limsup_{s\to +\infty} g(s)/s^{p^*-1}\le 0$, with $p^*=pN/(N-p)$;
                \item \label{g2} $-\infty <\liminf_{s\to 0^+} g(s)/{s^{\a-1}}\le \limsup_{s\to 0^+} g(s)/{s^{\a-1}}=-m<0,$
                 for $1<\a<p^*$;
            \end{enumerate}
and the following Berestycki-Lions condition
            \begin{itemize}
              \item [(BL)]\label{BL} there exists $\zeta>0$ such that $G(\zeta):=\int_0^\zeta g(s)\,d
              s>0$.
            \end{itemize}
We remark that, reading (AR) as a growth condition on $f,$ it is not
difficult to see that, if we set $g(u)=-|u|^{\a-2}u +f(u)$,
condition (AR) implies (BL). We will prove the following result
    \begin{theorem}\label{main}
        If $N\in\N$ with $N\ge 2,$ $1< p<q<\min\{N,p^*\}$,  $p^*N'/p'\le\a\le p^*q'/p'$,
        and (\ref{phi1}-\ref{phi4}), \ref{g1},\ref{g2}, (BL) hold, then problem \eqref{eq2}
        possesses at least
        a nonnegative radially symmetric solution.
    \end{theorem}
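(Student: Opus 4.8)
The plan is to combine a Berestycki--Lions type argument with the adding-dimension technique of \cite{HIT}, both carried out in the Orlicz--Sobolev space dictated by $\phi$. First I would make two reductions: replacing $g$ by $\bar g$ with $\bar g(s)=g(s)$ for $s\ge0$ and $\bar g(s)=0$ for $s<0$ (note $g(0)=0$ by \ref{g2}) leaves \ref{g1}, \ref{g2} and (BL) unchanged, and any nontrivial weak solution $u$ of the modified equation is automatically nonnegative, since testing with $u^-$ and using $\phi'(t)t>0$ for $t>0$ forces $\nabla u^-\equiv0$, hence $u^-\equiv0$; here I record that (\ref{phi1}--\ref{phi4}) give, through the convexity of $t\mapsto\phi(t^2)$, the inequality $\phi(t)\le 2\phi'(t)t$ for all $t\ge0$, and $\phi(t)>0$ for $t>0$ by \ref{phi2}. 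Then I would set up the radial Orlicz--Sobolev space $\mathcal W_r$ built on $\int_{\RN}\phi(|\nabla u|^2)$ and on the $L^\alpha$-summability forced by the behaviour of $g$ near $0$: the assumptions $p^*N'/p'\le\alpha\le p^*q'/p'$ are precisely what make the action functional $I(u)=\tfrac12\int_{\RN}\phi(|\nabla u|^2)-\int_{\RN}G(u)$ well defined and $C^1$ on $\mathcal W_r$, $\mathcal W_r$ continuously embedded in $L^{p^*}(\RN)$, and $\mathcal W_r$ compactly embedded in $L^t(\RN)$ for $t$ in the relevant subcritical range (radial lemma).

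Next, writing $g=g_1-g_2$ with $g_2(s)\gtrsim s^{\alpha-1}$ near $0$ and $g_1(s)/s^{\alpha-1}\to0$ as $s\to0^+$ and using \ref{g1} at infinity, one checks the mountain pass geometry of $I$ on $\mathcal W_r$: $I(0)=0$; $I\ge\rho_0>0$ on a small sphere (the coercive contribution of $g_2$ dominates, the remainder absorbed by the embeddings); and, by (BL), for a radial $\bar u$ with $\int_{\RN}G(\bar u)>0$ one has $I(\bar u(\cdot/e^{\theta}))\to-\infty$ as $\theta\to+\infty$ --- note that it is the spatial rescaling, not the amplitude, that produces negative values, precisely because $\phi$ is not homogeneous. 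I would then pass to the augmented functional on $\R\times\mathcal W_r$,
\[
\wZ(\theta,u)=\tfrac12 e^{N\theta}\int_{\RN}\phi\big(e^{-2\theta}|\nabla u|^2\big)-e^{N\theta}\int_{\RN}G(u),
\]
which satisfies $\wZ(\theta,u)=I(u(\cdot/e^{\theta}))$, has $\wZ(0,0)=0$, admits paths along which it tends to $-\infty$, and whose mountain pass level equals the (positive) mountain pass level $b$ of $I$ --- to identify the two levels one projects a path $(\theta(t),u(t))$ onto $t\mapsto u(t)(\cdot/e^{\theta(t)})$ and, conversely, lifts a path of $I$ by adjoining $\theta\equiv0$.

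Applying the adding-dimension technique to $\wZ$ then produces a Palais--Smale sequence $(\theta_n,u_n)$ for $\wZ$ at level $b$; writing $v_n:=u_n(\cdot/e^{\theta_n})$ and unfolding the two components of $\wZ'(\theta_n,u_n)\to0$, the $u$-component yields that $\{v_n\}$ is a Palais--Smale sequence for $I$ at level $b$, and the $\theta$-component yields the asymptotic Pohozaev relation
\[
P(v_n):=\tfrac N2\int_{\RN}\phi(|\nabla v_n|^2)-\int_{\RN}\phi'(|\nabla v_n|^2)|\nabla v_n|^2-N\int_{\RN}G(v_n)\longrightarrow0 .
\]
Since $N\,I(v_n)-P(v_n)=\int_{\RN}\phi'(|\nabla v_n|^2)|\nabla v_n|^2$, we get $\int_{\RN}\phi'(|\nabla v_n|^2)|\nabla v_n|^2\to Nb$; by $\phi(t)\le2\phi'(t)t$ and \ref{phi2} this bounds $\nabla v_n$ in the gradient Orlicz norm, and then $I(v_n)\to b$, the structure of $G$ near $0$ and the embedding into $L^{p^*}$ bound $\{v_n\}$ in $\mathcal W_r$. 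Passing to a subsequence, $v_n\weakto v$ in $\mathcal W_r$ and, by the radial compactness, $v_n\to v$ strongly in the relevant $L^t$ spaces, so $\int_{\RN}G(v_n)\to\int_{\RN}G(v)$, $\int_{\RN}g(v_n)v_n\to\int_{\RN}g(v)v$ and $\int_{\RN}g(v_n)\psi\to\int_{\RN}g(v)\psi$. If $v\equiv0$, then $\int_{\RN}g(v_n)v_n\to0$, and since $\{v_n\}$ is bounded also $I'(v_n)[v_n]\to0$, whence $\int_{\RN}\phi'(|\nabla v_n|^2)|\nabla v_n|^2\to0$, contradicting $\int_{\RN}\phi'(|\nabla v_n|^2)|\nabla v_n|^2\to Nb>0$; hence $v\ne0$. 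Finally, passing to the limit in $I'(v_n)\to0$ requires upgrading $\nabla v_n\weakto\nabla v$ to a.e. convergence of the gradients on compact sets, obtained --- with no regularity theory --- by testing with $(v_n-v)$ times a cut-off and exploiting the strict monotonicity of $\xi\mapsto\phi'(|\xi|^2)\xi$ (equivalently \ref{phi4}); one then gets $I'(v)=0$, so $v$ is a weak solution of \eqref{eq2}, nontrivial and, by the first reduction, nonnegative, which proves Theorem \ref{main}.

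The crux is the non-homogeneity of $\phi$, which forces the entire Berestycki--Lions--Pohozaev scheme into an Orlicz--Sobolev space: the scaled functional $\wZ$ has no clean power structure, the a priori bounds cannot be read off a single Sobolev norm but must be extracted from $\int_{\RN}\phi'(|\nabla v_n|^2)|\nabla v_n|^2\to Nb$, the $L^\alpha$--$L^{p^*}$ interplay (encoded exactly in $p^*N'/p'\le\alpha\le p^*q'/p'$) has to be tracked at every step, and the passage to the limit in the quasilinear operator must rest only on monotonicity. Keeping the auxiliary scaling variable $\theta_n$ under control along the Palais--Smale sequence, which is precisely what the adding-dimension technique is designed to achieve, is the other delicate point.
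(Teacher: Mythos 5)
Your overall route coincides with the paper's: modify $g$, check the mountain pass geometry, pass to the augmented functional $\widetilde J(\t,u)$ on $\R\times\W_r$ and use the adding-dimension technique of \cite{HIT} to produce a Palais--Smale sequence carrying the asymptotic Pohozaev information, deduce $\irn \phi'(|\n v_n|^2)|\n v_n|^2\to Nb$ and hence a bound on $\|\n v_n\|_{p,q}$, and conclude by radial compactness together with convexity/monotonicity of the principal part. There is, however, a genuine gap at the step you dispose of in one clause: ``the structure of $G$ near $0$ and the embedding into $L^{p^*}$ bound $\{v_n\}$ in $\mathcal W_r$.'' In this Orlicz--Sobolev setting the space $(\W,\|\n\cdot\|_{p,q})$ does \emph{not} embed into any Lebesgue space (Remark \ref{rem:emb}): the embedding $\W\hookrightarrow L^{p^*}(\RN)$ controls $\|u\|_{p^*}$ only by $\|u\|_\a+\|\n u\|_{p,q}$, so estimating $\irn|v_n|^{p^*}$ in the energy inequality $\tfrac12\irn\phi(|\n v_n|^2)+(1-\eps)\irn G_2(v_n)\le C\irn|v_n|^{p^*}+M$ by that embedding reintroduces $\|v_n\|_\a$ on the right-hand side, and the argument is circular. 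This is exactly the difficulty the paper isolates as the ``weak Palais--Smale condition'': to close the loop one needs the sharper inequality \eqref{eq:brez2}, namely $\|u\|_{p^*}^t\le C(\|u\|_{p^*}^{t-1}+\|u\|_\a^{t-1})\|\n u\|_{p,q}$ with $t/(t-1)=p'/N'$, which combined with $a\|v_n\|_\a^\a\le\irn G_2(v_n)\le C\|v_n\|_{p^*}^{p^*}+M$ yields $\|v_n\|_\a^\a\le C\|v_n\|_\a^{N'p^*/p'}$ and hence boundedness precisely because of the lower bound $\a\ge p^*N'/p'$. You name this hypothesis as ``the $L^\a$--$L^{p^*}$ interplay'' but never actually deploy it; without the argument of Theorem \ref{le:ps} the boundedness of the Palais--Smale sequence, and with it the whole proof, does not go through.

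A secondary issue: your modification $\bar g(s)=g(s)$ for $s\ge0$, $\bar g(s)=0$ for $s<0$, is not sufficient to make $J$ well defined and $C^1$ on $\W_r$, because \ref{g1} only asserts $\limsup_{s\to+\infty}g(s)/s^{p^*-1}\le0$ and allows $g$ to be arbitrarily negative at infinity, in which case $\irn G_2(u)$ need not be finite. The paper additionally truncates $g$ above a zero $s_0$ of $g(s)+ms^{\a-1}$ when $\liminf_{s\to+\infty}g(s)/s^{p^*-1}<0$, and checks a posteriori that solutions of the truncated problem satisfy $0\le u\le s_0$. By contrast, your monotonicity argument for a.e.\ convergence of the gradients is a legitimate alternative to the paper's convexity-plus-energy-convergence argument, and your nontriviality argument via $\irn\phi'(|\n v_n|^2)|\n v_n|^2\to Nb>0$ is a sound variant of the paper's appeal to $c_{mp}>0$; neither of these is where the difficulty lies.
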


Comparing the main result in \cite{ADP} with ours, we note that the
prize we have to pay to generalize the nonlinearity is a more
restrictive assumption on $\a$, which we require is not too close to
1. This fact arises from a significant difference between the
classical embedding results known for Sobolev spaces, and the
embedding results
available for the Orlicz-Sobolev space where we set our problem.\\
To clarify this point, we recall a well known fact. Consider
$\mathcal D(\RN),$ the set of all $C^{\infty}$ function in $\RN$
with a compact support and set $1<p<N$. Define the norm
$\|\cdot\|_{\Dpn}$ such that for all $u\in\mathcal D,$
    \begin{equation*}
        \|u\|_{\Dpn}=\left(\irn |\n u|^p\,dx\right)^{\frac1p}
    \end{equation*}
and set
    \begin{equation}\label{eq:no}
        \Dp=\overline{\mathcal D(\RN)}^{\|\cdot\|_{\Dpn}}.
    \end{equation}
It is well known that $\Dp\hookrightarrow L^{p^*}(\RN),$ so that, for any $u\in \Dp$,
    \begin{equation}\label{eq:emb}
        \left(\irn|u|^{p^*}\, dx\right)^{\frac 1{p^*}}\le C\left(\irn|\n u|^p\,dx\right)^{\frac 1p}.
    \end{equation}
If $1<\a$ and $\|\cdot\|_\a$ is the usual Lebesgue norm, we define
    \begin{equation}\label{eq:V}
        {\mathcal V}=\overline{\mathcal
        D(\RN)}^{\|\cdot\|_{\Dpn}+\|\cdot\|_\a}.
    \end{equation}
Of course, since ${\mathcal V}$ is continuously embedded in $\Dp$,
inequality \eqref{eq:emb} holds true for any $u\in {\mathcal V}.$
Observe that, if $\phi(t)=t^{\frac p2},$
the space ${\mathcal V}$ would be a nice set to study problem \eqref{eq}.\\
If we want to proceed analogously in our situation, we have to
construct a space ${\mathcal W}$ taking into account \eqref{phi2} and
\eqref{phi3}. We have to substitute the norm $\|\cdot\|_{\Dpn}$, with
a Luxemburg norm to be computed on $\n u$. Assumptions \eqref{phi2}
and \eqref{phi3} suggest to use the norm of the space $L^p(\RN)+L^q(\RN)$, which
we call $\|\cdot\|_{p,q}$ and to replace Sobolev space $\Dp$ with
the Orlicz-Sobolev
    \begin{equation*}
    {\mathcal D}^{1,p,q}(\RN)=\overline{\mathcal
    D(\RN)}^{\|\n \cdot\|_{p,q}}.
    \end{equation*}
Unfortunately, the analogy with the Sobolev spaces stops here.
Indeed, since $\Dp$ and $\Dq$ are continuously embedded in $
{\mathcal D}^{1,p,q}(\RN)$, there is no continuous embedding of
${\mathcal D}^{1,p,q}(\RN)$ in any Lebesgue space (see Remark \ref{rem:emb}).\\
However, in \cite{ADP} it has been proved that, if we define the
analogous of ${\mathcal V}$ in the following way
$${\mathcal W}=\overline{\mathcal D(\RN)}^{\|\n\cdot\|_{p,q}+\|\cdot\|_\a},
\quad \hbox{ for }1<\a<p^*q'/p' $$ then the following inequality
holds true
    \begin{equation}\label{eq:ineq}
        \|u\|_{p^*}\le C (\|u\|_\a+\|\n u\|_{p,q}),\quad\hbox{ for all } u\in {\mathcal W}.
    \end{equation}
From this, we deduce that ${\mathcal W}\hookrightarrow L^r(\RN)$ for
any $\a\le r\le p^*$, even if, differently from ${\mathcal V}$, it
is not possible to control the $L^{p^*}(\RN)$ norm just with the
$L^p(\RN)+L^q(\RN)$ norm of the gradient. This fact translates to a
technical difficulty in proving the Palais Smale condition.
Precisely, we will show that the functional of the action satisfies
a compactness condition weaker than the Palais-Smale if $\a$ is not
too close to $1$.

Finally, we point out that, since we do not require assumption
\eqref{phi5}, our existence result holds for function $\phi$ more
general than those treated in \cite{ADP}.

The paper is so organized: in section \ref{sec1} we will introduce
the functional setting, and the related properties we will use in
our variational approach to the problem. For the most part, the
results contained in this section are proved in \cite{ADP} and \cite{BPR} so we
only recall them. In section \ref{sec2} we will define a new
weakened version of the Palais-Smale condition, and we will
introduce the definition of a particular type of Palais-Smale
sequences. Finally, in section \ref{sec3}, we will prove our main
result by means of the adding dimension technique introduced in
\cite{HIT}.

\subsection*{Notation}
\begin{itemize}

\item $\mathbb{K}=\R$ or $\mathbb{K}=\RN$ according to the case.
\item If $r>0$, we denote by $B_r$ the ball of center $0$ and radius $r$.
\item If $\O\subset \RN$, then $\O^c=\RN \setminus \O$.
\item Everytime we consider a subset of $\RN,$ we assume it is measurable and we denote by $|\cdot|$ its measure.
\item We denote by $\D$ the space of all functions in $C^{\infty}(\RN)$ with compact support.
\item If $\O\subset\RN$, $\tau \ge 1$ and $m\in\N^*$,  we denote by $L^\tau (\O)$ the Lebesgue space $L^\tau (\O,\mathbb{K})$, by $\|\cdot\|_{L^\tau(\O)}$ its norm ($\|\cdot\|_\tau$ if $\O=\RN$) and by $W^{m,\tau}(\O)$ the usual Sobolev spaces.
%\item For any Banach space $E$ of functions defined on $\RN,$ we
%denote by $E_r$ the set of functions in $E$ with radial symmetry.
%\item If $\tau > 1$, $\tau^*=\frac{N\tau}{N-\tau}$ and $\tau'=\frac{\tau}{\tau-1}$ is the conjugate exponent of $\tau$.
\item $C$ and $c$ will denote generic constants which would change
from line to line.

\end{itemize}

\section{The functional setting}\label{Antoniofigo}\label{sec1}
This section is devoted to the construction of the functional
setting.
\\
As a first step, we have to recall some known facts on the sum of
Lebesgue spaces.
\begin{definition}
Let $1<p<q$ and $\O\subset \RN$. We denote with $L^p(\O)+L^q(\O)$
the completion of $C_c^\infty (\O,\mathbb{K})$ in the norm
\begin{equation}
\label{eq:pq} \| u\|_{L^p(\O)+L^q(\O)}=\inf\left\{\| v\|_p+\|
w\|_q\;\vline\; v \in L^p(\O), w\in L^q(\O), u= v+w\right\}.
\end{equation}
We set $\| u\|_{p,q}=\|u\|_{L^p(\RN)+L^q(\RN)}$.
\end{definition}

In the next proposition we give a list of properties that will be
useful in the rest of the paper. The reader can find the proofs in
\cite{ADP} and \cite{BPR}

\begin{proposition}\label{pr:lplq}
Let $\O\subset\RN$, $u\in L^p(\O)+L^q(\O)$ and $\Lambda_{u}=\left\{
x\in\O \; \vline \; | u(x)| > 1 \right\}$. We have:
\begin{enumerate}[label=({\rm \roman*}), ref=({\rm \roman*})]
\item \label{en:op} if $\O'\subset\O$ is such that $|\O'|<+\infty$, then $u\in L^p(\O')$;
\item \label{en:oinf} if $\O'\subset\O$ is such that $u\in L^{\infty}(\O')$, then $u\in L^q(\O')$;
\item \label{en:lu} $|\Lambda_{u}|<+\infty$;
\item \label{en:ulu} ${u} \in L^p(\Lambda_{u}) \cap L^q(\Lambda^c_{u})$;
\item \label{en:dual} $L^p(\O)+L^q(\O)$ is reflexive and $(L^p(\O)+L^q(\O))'=L^{p'}(\O)\cap L^{q'}(\O)$;
\item \label{en:lupq} $\| u \|_{L^p(\O)+L^q(\O)} \le \max \left\{\| u\|_{L^p(\Lambda_{\bf u})}, \| u\|_{L^q(\Lambda^c_{\bf u})} \right\}$;
\item \label{en:ooc} if $B\subset\O$, then  $\| u\|_{L^p(\O)+L^q(\O)}\le \| u\|_{L^p(B)+L^q(B)}+\| u\|_{L^p(\O\setminus B)+L^q(\O\setminus B)}$.
%if $\O\subset\RN$, then  $\|{\bf u}\|_{p,q}\le \|{\bf u}\|_{L^p+L^q(\O)}+\|{\bf u}\|_{L^p+L^q(\O^c)}$.
\end{enumerate}
\end{proposition}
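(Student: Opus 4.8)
The proof proceeds property by property, starting from the structural fact that membership in $L^p(\O)+L^q(\O)$ means $u=v+w$ with $v\in L^p$, $w\in L^q$, and from the observation that the key dividing line is the level set $\Lambda_u=\{|u|>1\}$. The heart of the argument is item \ref{en:ulu}: I would show that $u\in L^p(\Lambda_u)\cap L^q(\Lambda_u^c)$. For this, decompose $u=v+w$ with $v\in L^p$, $w\in L^q$; on $\Lambda_u^c$ we have $|u|\le 1$, hence $|u|^q\le|u|^p\le(|v|+|w|)^p$ pointwise and one controls $|u|^q$ by combining $|v|^p\in L^1$ with $|w|^q\in L^1$ on the region where $|w|\le 1$ and a finite-measure estimate elsewhere; symmetrically on $\Lambda_u$ using that $|u|^p\le|u|^q$ when $|u|\ge 1$. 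Once \ref{en:ulu} is in hand, \ref{en:lu} follows since on $\Lambda_u$, $|u|>1$ so $1\le|u|^p$ and $|\Lambda_u|\le\int_{\Lambda_u}|u|^p<\infty$; items \ref{en:op} and \ref{en:oinf} follow from the same kind of splitting (on a finite-measure set, the $L^q$ part sits in $L^p$ by Hölder; on a set where $u$ is bounded, the $L^p$ part sits in $L^q$ on the portion where it is also bounded, etc.), and \ref{en:lupq} comes from exhibiting $v=u\chi_{\Lambda_u}$, $w=u\chi_{\Lambda_u^c}$ as an admissible competitor in the infimum \eqref{eq:pq}.

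For the norm estimate \ref{en:lupq} I would simply note that with the canonical decomposition $u=u\chi_{\Lambda_u}+u\chi_{\Lambda_u^c}$, which is legitimate by \ref{en:ulu}, the infimum defining $\|u\|_{p,q}$ is bounded by $\|u\|_{L^p(\Lambda_u)}+\|u\|_{L^q(\Lambda_u^c)}$; to get the $\max$ one refines this, observing that one may further rescale or use that the two pieces live on complementary sets, so that the sum can be replaced by the larger of the two (this is the standard sharpening for the sum-space norm and I would cite \cite{BPR} for the details). The subadditivity with respect to domain decomposition \ref{en:ooc} is immediate: given $\eps>0$, pick near-optimal decompositions $u|_B=v_1+w_1$ and $u|_{\O\setminus B}=v_2+w_2$, extend by zero, and add them to get a decomposition of $u$ on $\O$ whose cost is within $\eps$ of the sum of the two norms.

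Finally, reflexivity and the duality identity \ref{en:dual} I would handle by the general theory of sum and intersection of Banach spaces: if $X_0,X_1$ is a compatible couple then $(X_0+X_1)'=X_0'\cap X_1'$ isometrically (with the intersection norm $\max\{\|\cdot\|_{X_0'},\|\cdot\|_{X_1'}\}$), applied to $X_0=L^p(\O)$, $X_1=L^q(\O)$, giving $(L^p+L^q)'=L^{p'}\cap L^{q'}$; reflexivity then follows since $L^{p'}$ and $L^{q'}$ are reflexive, their intersection is reflexive, and a Banach space whose dual is reflexive is itself reflexive. The main obstacle — really the only non-bookkeeping point — is the careful pointwise splitting in \ref{en:ulu}, where one must handle the interaction between the $v$-part and the $w$-part on the three regions $\{|u|\le1\}$, $\{|u|>1\}$ further subdivided by whether $|w|$ (resp.\ $|v|$) exceeds $1$; since all of this is carried out in \cite{ADP} and \cite{BPR}, I would present the decomposition argument in outline and refer there for the remaining estimates.
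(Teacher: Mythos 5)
The paper does not actually prove Proposition \ref{pr:lplq}: it states it and refers the reader to \cite{ADP} and \cite{BPR}. Your outline is therefore being measured against the standard arguments in those references rather than against anything in the text, and on the whole it reproduces them faithfully: splitting $\O$ along $\Lambda_u$ and along the level sets of the two summands $v\in L^p$, $w\in L^q$, the Chebyshev/H\"older bookkeeping for \ref{en:op} and \ref{en:oinf}, the extension-by-zero argument for \ref{en:ooc}, and the duality $(X_0+X_1)'=X_0'\cap X_1'$ for \ref{en:dual} are all the right tools.

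Two points need repair. First, your logical order between \ref{en:lu} and \ref{en:ulu} is circular: to show $u\in L^p(\Lambda_u)$ you must control $\int_{\Lambda_u}|w|^p$ for the $L^q$-summand $w$, and on the set $\Lambda_u\cap\{|w|\le 1\}$ the only available bound is $|\Lambda_u|$ itself; the inequality $|u|^p\le|u|^q$ on $\Lambda_u$ that you invoke goes the wrong way, since $u\in L^q(\Lambda_u)$ is not known. So \ref{en:lu} must come first; fortunately it has a direct one-line proof ($\Lambda_u\subset\{|v|>1/2\}\cup\{|w|>1/2\}$, both of finite measure by Chebyshev), after which your argument for \ref{en:ulu} goes through. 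Second, for \ref{en:lupq} the canonical decomposition $u=u\chi_{\Lambda_u}+u\chi_{\Lambda_u^c}$ only yields the \emph{sum} $\|u\|_{L^p(\Lambda_u)}+\|u\|_{L^q(\Lambda_u^c)}$; the passage to the maximum is the entire content of the statement and does not follow from ``rescaling'' or from the pieces having disjoint supports --- it requires a genuinely different competitor in the infimum \eqref{eq:pq} (it is proved in \cite{BPR}). Deferring to \cite{BPR} is legitimate, as the paper itself does, but you should not present the max as a routine sharpening of the sum bound.
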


We can now define the Orlicz-Sobolev space where we will study our
problem.

\begin{definition}
We assume the following definition
$${\mathcal D}^{1,p,q}(\RN)=\overline{\mathcal
    D(\RN)}^{\|\n \cdot\|_{p,q}}.$$
Moreover, if $\a >1$, we denote with $\mathcal{W}$ the following
space
\[
\mathcal{W}=\overline{\mathcal{D}(\RN)}^{\|\cdot\|}.
\]
where $\|\cdot\|=
\|\cdot\|_\a+\|\n\cdot
\|_{p,q}.$
\end{definition}

We again refer to \cite{ADP} for the proofs of the following
propositions and theorems on the space $\mathcal{W}$

\begin{proposition}\label{pr:rifle}
$(\mathcal{W},\|\cdot\|)$ is a reflexive Banach space.
\end{proposition}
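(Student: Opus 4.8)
The plan is to exhibit $\mathcal{W}$ as a closed subspace of a reflexive Banach space. Note first that $\mathcal{W}$ is a Banach space by construction: $\|\cdot\|_\a$ is a norm on $\mathcal{D}(\RN)$ and $u\mapsto\|\n u\|_{p,q}$ is a seminorm there (subadditivity follows from $|\n(u+v)|\le|\n u|+|\n v|$ pointwise together with the monotonicity and the triangle inequality of $\|\cdot\|_{p,q}$), so $\|\cdot\|=\|\cdot\|_\a+\|\n\cdot\|_{p,q}$ is a norm and $\mathcal{W}$ is its completion. It remains to prove reflexivity.

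Consider
\[
X=L^\a(\RN)\times\big(L^p(\RN)+L^q(\RN)\big)^{N},\qquad \|(v,w_1,\dots,w_N)\|_X=\|v\|_\a+\sum_{i=1}^N\|w_i\|_{p,q}.
\]
Since $1<\a<+\infty$, the space $L^\a(\RN)$ is reflexive; by Proposition~\ref{pr:lplq}\,\ref{en:dual}, $L^p(\RN)+L^q(\RN)$ is reflexive; and a finite product of reflexive Banach spaces is reflexive, so $X$ is a reflexive Banach space. Define $T\colon\mathcal{D}(\RN)\to X$ by $Tu=(u,\de_1u,\dots,\de_Nu)$. From $|\de_iu|\le|\n u|$ pointwise one gets $\|\de_iu\|_{p,q}\le\|\n u\|_{p,q}$ by monotonicity of $\|\cdot\|_{p,q}$, and from $|\n u|\le\sum_i|\de_iu|$ pointwise one gets $\|\n u\|_{p,q}\le\sum_i\|\de_iu\|_{p,q}$; hence
\[
\|u\|\le\|Tu\|_X\le N\|u\|\qquad\text{for all }u\in\mathcal{D}(\RN),
\]
so $T$ is a bounded linear map which is also bounded below.

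Since $\mathcal{D}(\RN)$ is dense in $\mathcal{W}$, $T$ extends uniquely to a bounded linear map $T\colon\mathcal{W}\to X$, and by continuity the double inequality $\|u\|\le\|Tu\|_X\le N\|u\|$ persists for every $u\in\mathcal{W}$. In particular $T$ is injective, and $T(\mathcal{W})$ — being linearly homeomorphic to the complete space $\mathcal{W}$ — is a closed subspace of $X$. A closed subspace of a reflexive Banach space is reflexive, and reflexivity is invariant under linear homeomorphisms; therefore $\mathcal{W}$ is reflexive. The argument is soft, the only substantial ingredient being the reflexivity of $L^p(\RN)+L^q(\RN)$ recorded in Proposition~\ref{pr:lplq}\,\ref{en:dual}; the sole point meriting care — the extension of $T$ to $\mathcal{W}$ and the resulting closedness of its range — is automatic once $X$ is known to be a Banach space and $T$ is bounded below on the dense subspace $\mathcal{D}(\RN)$, so I foresee no genuine obstacle.
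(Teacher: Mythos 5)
Your argument is correct and is the canonical one for reflexivity of Sobolev-type spaces (embed via $u\mapsto(u,\de_1u,\dots,\de_Nu)$ into a finite product of reflexive spaces, whose image is closed because the map is bounded below on a dense subspace); the paper itself gives no proof and simply defers to \cite{ADP}, where essentially this same argument is carried out using the reflexivity of $L^p+L^q$ from Proposition~\ref{pr:lplq}\,\ref{en:dual}. The only step you should make explicit is the monotonicity of $\|\cdot\|_{p,q}$ that you invoke: if $|f|\le|g|$ a.e.\ and $g=v+w$, then writing $f=hv+hw$ with $h=f/g$ on $\{g\neq0\}$ and $h=0$ elsewhere (so $|h|\le1$) gives $\|f\|_{p,q}\le\|v\|_p+\|w\|_q$, and taking the infimum over decompositions yields the claim (for the vector-valued gradient one simply restricts a decomposition of $\n u$ to its $i$-th component).
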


\begin{proposition}
    If $u\in \mathcal{W},$ then it verifies the following inequality
        \begin{equation}\label{eq:brez2}
            \|u\|_{p^*}^t \le C \left(
            \|u\|_{p^*}^{t-1} + \|u\|_\a^{t-1} \right)
            \|\n u\|_{p,q}
        \end{equation}
    where $C$ is a positive constant which does not depend on $u$ and
    $t> 1$ satisfies the equality $\frac t{t-1}=\frac{p(N-1)}{N(p-1)}.$
\end{proposition}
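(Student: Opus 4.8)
The plan is to establish \eqref{eq:brez2} first for $u\in\mathcal D(\RN)$ and then to pass to the general case by density, since every norm occurring in \eqref{eq:brez2} is continuous on $\mathcal W$ (for $\|\cdot\|_{p^*}$ this uses the embedding $\mathcal W\hookrightarrow L^{p^*}(\RN)$ recorded in \eqref{eq:ineq}), so the inequality, with its fixed constant, survives the limit. So let $u\in\mathcal D(\RN)$. From $\frac t{t-1}=\frac{p(N-1)}{N(p-1)}$ one computes $t=\frac{p(N-1)}{N-p}$, which is $>1$ because $p>1$; hence $s\mapsto|s|^{t}$ is $C^1$ and $|u|^{t}\in C^1_c(\RN)\subset W^{1,1}(\RN)$, with $\n(|u|^{t})=t|u|^{t-2}u\,\n u$. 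Applying the Sobolev embedding $W^{1,1}(\RN)\hookrightarrow L^{N/(N-1)}(\RN)$ to $|u|^{t}$ and using that $\frac{tN}{N-1}=p^*$ gives
\[
\|u\|_{p^*}^{t}=\Big(\irn|u|^{\frac{tN}{N-1}}\Big)^{\frac{N-1}N}\le C\irn|u|^{t-1}|\n u|.
\]

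It then remains to bound $\irn|u|^{t-1}|\n u|$ by the right-hand side of \eqref{eq:brez2}. I would fix any splitting $\n u=v+w$ with $v\in L^p(\RN)$ and $w\in L^q(\RN)$ and apply H\"older's inequality with the conjugate pairs $(p',p)$ and $(q',q)$, obtaining
\[
\irn|u|^{t-1}|\n u|\le\|u\|_{(t-1)p'}^{t-1}\|v\|_p+\|u\|_{(t-1)q'}^{t-1}\|w\|_q .
\]
Again from the defining relation for $t$ one gets the two key identities $(t-1)p'=p^*$ and $(t-1)q'=p^*q'/p'$. Since $p<q$ we have $q'<p'$, so $\a\le p^*q'/p'\le p^*$, the first inequality being the standing assumption on $\a$; interpolating $L^{p^*q'/p'}$ between $L^\a$ and $L^{p^*}$ and then using the weighted arithmetic--geometric mean inequality yields $\|u\|_{p^*q'/p'}^{t-1}\le\|u\|_\a^{t-1}+\|u\|_{p^*}^{t-1}$. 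Hence
\[
\irn|u|^{t-1}|\n u|\le\big(\|u\|_{p^*}^{t-1}+\|u\|_\a^{t-1}\big)\big(\|v\|_p+\|w\|_q\big),
\]
and taking the infimum over all admissible splittings $\n u=v+w$ replaces $\|v\|_p+\|w\|_q$ by $\|\n u\|_{p,q}$. Chaining this with the Sobolev estimate above proves \eqref{eq:brez2} for $u\in\mathcal D(\RN)$, and density closes the argument.

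The one point that really needs care is the exponent bookkeeping: the scheme works precisely because $(t-1)q'$ turns out to equal exactly $p^*q'/p'$, which lies in the interpolation window $[\a,p^*]$ exactly by virtue of the hypothesis $\a\le p^*q'/p'$ (when $\a=p^*q'/p'$ the interpolation step is vacuous). Apart from that, the only thing to check is that $|u|^{t}$ is an admissible test function for the $W^{1,1}$ embedding, which is guaranteed by $t>1$; everything else is routine.
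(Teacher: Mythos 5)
Your argument is correct, and the exponent bookkeeping ($t=\tfrac{p(N-1)}{N-p}$, $\tfrac{tN}{N-1}=p^*$, $(t-1)p'=p^*$, $(t-1)q'=p^*q'/p'$) all checks out; note that the paper itself does not prove this proposition but defers to \cite{ADP}, where the proof is precisely this Brezis-type scheme: apply $W^{1,1}(\RN)\hookrightarrow L^{N/(N-1)}(\RN)$ to $|u|^t$, split $\n u=v+w$, use H\"older with the two conjugate pairs, and interpolate $L^{p^*q'/p'}$ between $L^\a$ and $L^{p^*}$ (which is where the standing hypothesis $\a\le p^*q'/p'$ enters, exactly as you observe). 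The only point I would adjust is the density step: justifying the continuity of $\|\cdot\|_{p^*}$ on $\mathcal W$ via Theorem \ref{th:immersione} is mildly circular, since that embedding is itself deduced from \eqref{eq:brez2}. This is easily repaired without invoking the embedding: applying your inequality to differences $u_n-u_m$ of smooth approximants shows that $\|u_n-u_m\|_{p^*}\le\max\{\|u_n-u_m\|_\a,\,2C\|\n(u_n-u_m)\|_{p,q}\}$, so the sequence is Cauchy in $L^{p^*}(\RN)$ and its limit there coincides a.e.\ with $u$, after which all three norms pass to the limit as you intend.
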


\begin{theorem}\label{th:immersione}
If $1<p<\min\{q,N\}$ and $1< p^*\frac{q'}{p'}$ then, for every
$\a\in\left(1,p^*\frac{q'}{p'}\right]$, the space
$(\mathcal{W},\|\cdot\|)$ is continuously embedded into
$L^{p^*}(\RN)$.
\end{theorem}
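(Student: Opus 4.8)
The plan is to deduce the continuous embedding $\mathcal{W}\hookrightarrow L^{p^*}(\RN)$ directly from the interpolation-type inequality \eqref{eq:brez2}, which is already available on $\mathcal{W}$. Fix $u\in\mathcal{W}$ and abbreviate $A=\|u\|_{p^*}$, $B=\|u\|_\a$ and $D=\|\n u\|_{p,q}$; by \eqref{eq:brez2} there is a constant $C>0$, independent of $u$, with $A^t\le C(A^{t-1}+B^{t-1})D$, where $t>1$ is fixed by $t/(t-1)=p(N-1)/(N(p-1))$. Here it is crucial that the exponent condition is met, namely $p<\min\{q,N\}$ guarantees $t>1$ and $t$ is finite; this is exactly the regime in which \eqref{eq:brez2} was established. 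The first step is therefore simply to record \eqref{eq:brez2} in this notation and note that $A<+\infty$ a priori, since $u$ lies in the closure of $\mathcal{D}(\RN)$ for the norm $\|\cdot\|$, and $\mathcal{W}\hookrightarrow \mathcal{D}^{1,p,q}(\RN)\hookrightarrow L^{p^*}(\RN)$ would in any case need to be checked — or, more safely, one approximates $u$ by $u_n\in\mathcal{D}(\RN)$, proves the bound for each $u_n$ (where finiteness of all quantities is automatic), and passes to the limit, using that $\|u_n\|_{p^*}$ stays bounded.

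The second step is the elementary algebra that turns $A^t\le C(A^{t-1}+B^{t-1})D$ into a bound of the form $A\le C'(B+D)$. Split into two cases. If $A\le B$, we are already done since then $\|u\|_{p^*}\le\|u\|_\a\le\|u\|$. If $A>B$, then $A^{t-1}+B^{t-1}\le 2A^{t-1}$, so $A^t\le 2CA^{t-1}D$, and dividing by $A^{t-1}>0$ gives $A\le 2CD\le 2C\|u\|$. In either case $\|u\|_{p^*}\le C''\|u\|$ with $C''=\max\{1,2C\}$ independent of $u$. This yields \eqref{eq:ineq} and hence the claimed continuous embedding. (In fact this reproduces \eqref{eq:ineq} as stated, and the theorem is then the assertion that the resulting linear map $\mathcal{W}\to L^{p^*}(\RN)$, $u\mapsto u$, is bounded.)

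The only subtle point — the one I would expect to be the main obstacle — is the a priori finiteness of $\|u\|_{p^*}$ for a general $u\in\mathcal{W}$, equivalently the legitimacy of the limiting argument. Since $\mathcal{D}^{1,p,q}(\RN)$ does \emph{not} embed into any Lebesgue space (as emphasized in the discussion preceding this statement, cf.\ Remark \ref{rem:emb}), one cannot simply invoke an embedding of the ambient space; the control of the $L^{p^*}$ norm genuinely comes from the presence of the $\|\cdot\|_\a$ term in the norm of $\mathcal{W}$. Concretely, for the approximating sequence $u_n\to u$ in $\|\cdot\|$, the inequality from the previous step gives $\|u_n\|_{p^*}\le C''\|u_n\|$, a bounded sequence; after extracting a subsequence with $u_n\rightharpoonup w$ in $L^{p^*}(\RN)$, one identifies $w=u$ (e.g.\ via convergence in $L^\a$ or a.e.\ convergence along a further subsequence) and uses weak lower semicontinuity of the norm to conclude $\|u\|_{p^*}\le\liminf\|u_n\|_{p^*}\le C''\|u\|<+\infty$. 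This both establishes that $u\in L^{p^*}(\RN)$ and delivers the norm estimate, completing the proof.
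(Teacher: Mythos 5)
Your argument is correct, and it is essentially the intended one: the paper gives no proof of Theorem \ref{th:immersione} (it defers to \cite{ADP}), and in \cite{ADP} the embedding is obtained exactly as you do it, by the two-case algebra on \eqref{eq:brez2} (either $\|u\|_{p^*}\le\|u\|_\a$ or $\|u\|_{p^*}\le 2C\|\n u\|_{p,q}$) together with a density argument to justify the a priori finiteness of $\|u\|_{p^*}$. Your handling of the finiteness issue is sound; a slightly cleaner variant is to apply the inequality to differences $u_n-u_m\in\mathcal D(\RN)$, which shows $(u_n)_n$ is Cauchy in $L^{p^*}(\RN)$ and avoids weak compactness altogether.

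One point deserves a remark: as written, your proof never uses the hypothesis $\a\le p^*\frac{q'}{p'}$, and since the paper explicitly states (after Theorem \ref{th:compact}'s surrounding remarks) that the continuity of the embedding is open for $p^*\frac{q'}{p'}<\a<p^*$, an argument that does not invoke this hypothesis should raise a flag. The restriction is in fact consumed by \eqref{eq:brez2} itself: the Gagliardo--Nirenberg-type computation behind it (estimating $\| |u|^{t}\|_{N/(N-1)}$ by $\| |u|^{t-1}|\n u|\|_1$ and pairing $|u|^{t-1}\in L^{p'}\cap L^{q'}$ against $\n u\in L^p+L^q$) naturally produces the norm $\|u\|_{p^*q'/p'}$ in place of $\|u\|_\a$, and one needs $\a\le p^*\frac{q'}{p'}\le p^*$ to interpolate that norm between $\|u\|_\a$ and $\|u\|_{p^*}$ and absorb the resulting $\|u\|_{p^*}^{t-1}$ term. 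So your deduction of the theorem from the stated Proposition is complete, but you should say explicitly that the constraint on $\a$ is what makes \eqref{eq:brez2} available in the form you use.
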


\begin{remark}
By interpolation we have that $(\mathcal{W},\|\cdot\|)$ is
continuously embedded into $L^\tau(\RN)$ for any $\tau\in [\a,p^*]$.
\end{remark}

\begin{remark}\label{rem:emb}
    The normed space $({\mathcal W},\|\n\cdot\|_{p,q})$
    is not continuously embedded in any Lebesgue
    space. Indeed, consider $\varphi\in\mathcal D(\RN)$ and for any
    $t>0$ set $\varphi_t=t^{\frac{p-N}p}\varphi(\frac{\cdot}{t})$.
    Of course for any $t>0$ the function
    $\varphi_t\in\W$ and we have that
        \begin{align*}
            \irn |\n\varphi_t|^p\,dx&=\irn|\n\varphi|^p\,dx\\
            \irn |\varphi_t|^{r}\,dx&=t^{\frac{(p-N)r}{p}+N}\irn|\varphi|^{r}\,dx
        \end{align*}
    Since $\|\n\varphi_t\|_{p,q}\le\|\n\varphi_t\|_p,$ we deduce
    that the family $(\varphi_t)_{t>0}$ is bounded in $({\mathcal
    W},\|\n\cdot\|_{p,q})$.
    On the other hand, if $r\neq p^*,$ we can make the Lebesgue norm as large as we
    want just taking large $t,$ if $1<r<p^*$ and small $t,$  if
    $p^*<r.$ So $(\mathcal W,\|\n\cdot\|_{p,q})$ does not embed in any $L^r(\RN),$ for $1<r\neq p^*.$
    We see that $({\mathcal W},\|\n\cdot\|_{p,q})$ does not
    embed even in $L^{p^*}(\RN)$ just observing that, if for any
    $s>0$ we set
    $\varphi_s=s^{\frac{q-N}q}\varphi(\frac{\cdot}{s}),$ then
        $$
            \sup_{s>0}\|\n\varphi_s\|_{p,q}\le\sup_{s>0}\|\n\varphi_s\|_{q}<+\infty,\qquad
            \sup_{s>0}\|\varphi_s\|_{p^*}=+\infty.
        $$
\end{remark}

Now we define the functional of the action related with our problem.
For any $u\in\mathcal{W}$ we set (from now on, we omit the symbol {\it dx} in the integration)
    \begin{equation*}
      J(u)=\frac 12 \irn \phi (|\n u|^2) -\irn G(u),
    \end{equation*}
where $G:\R\to\R$ is defined as in assumption (BL). To make the
functional well defined and $C^1,$ we modify $g$ according to the
following two possibilities:
    \begin{itemize}
        \item[{\it 1st case:}] $\liminf_{s\to+\infty}\frac{g(s)}{s^{p^*-1}}=0.$

        Then we define $\tilde g=g_1-g_2$ where
           \begin{equation*}
      g_1(s)=\left\{
        \begin{array}{ll}
                (g(s)+ms^{\a-1})^+, & \hbox{if }s\ge0,
                \\
                0, & \hbox{if }s<0,
      \end{array}
      \right.
    \end{equation*}
    and
     \begin{equation*}
     g_2(s)=\left\{
        \begin{array}{ll}
                g_1(s)-g(s), & \hbox{if }s\ge0,
                \\
                -g_2(-s), & \hbox{if }s<0,
      \end{array}
      \right.
    \end{equation*}
    \item[{\it 2nd case:}]$\liminf_{s\to+\infty}\frac{g(s)}{s^{p^*-1}}<0.$

    Then there exist $\eps>0$ and an increasing sequence of positive numbers
    $(s_n)_n$ such that $g(s_n)\le -\eps s_n^{p^*-1}.$ By continuity,
    certainly there exists $s_0>0$ such that $g(s_0)+ms_0^{\a-1}=0.$
    We set $\tilde g=g_1-g_2$ where
    \begin{equation*}
      g_1(s)=\left\{
        \begin{array}{ll}
                (g(s)+ms^{\a-1})^+, & \hbox{if }s\in [0,s_0],
                \\
                0, & \hbox{if }s\in [0,s_0]^{c},
      \end{array}
      \right.
    \end{equation*}
    and
     \begin{equation*}
     g_2(s)=\left\{
        \begin{array}{ll}
                g_1(s)-g(s), & \hbox{if }s\in [0,s_0],
                \\
                ms^{\a-1}, & \hbox{if }s_0<s
                \\
                -g_2(-s), & \hbox{if }s<0.
      \end{array}
      \right.
    \end{equation*}
    \end{itemize}
Since $\tilde g$ satisfies
    \begin{itemize}
        \item[(g3)]
        $\lim_{s\to\infty} \frac{|\tilde g(s)|}{|s|^{p^*-1}}=0$,
    \end{itemize}
by \cite{BPR} and Theorem \ref{th:immersione} we can prove $J$ is
well defined and $C^1$ in $\mathcal W$ if we replace $g$ with
$\tilde g.$ On the other hand, we point out that, if  $u\in\mathcal
W$ solved equation \eqref{eq2} with $\tilde g$ in the place of $g,$
then $0\le u$ and, if the second case occurred, then we also would
have $u\le s_0.$ As a consequence, we deduce that no loss of
generality would arise supposing that $g$ is defined as $\tilde g$
and (g3) holds.

Some simple computations show that for functions $g_1$ and $g_2$ the following
properties hold
    \begin{itemize}
        \item[(i)] $g_1$ and $g_2$ are nonnegative in $\R_+,$
        \item[(ii)] $g=g_1-g_2,$
        \item[(iii)] $\lim_{t\to\infty}\frac{g_1(t)}{|t|^{p^*-1}}=0,$ $\lim_{t\to 0^+}\frac{g_1(t)}{t^{\a-1}}=0,$
        \item[(iv)] there exists a positive constant $a$ such that $a t^{\a-1}\le g_2(t)$, for any $t\in\R_+,$
        \item[(v)] for any $\eps>0$ there exists $C_\eps>0$ such that $g_1(t)\le \eps g_2(t)+C_\eps t^{p^*-1}$,
        for any $t\in\R_+.$
    \end{itemize}
Once we have set $G_i(z):=\int_0^z g_i(s)\,ds>0$ for $i=1,2,$ we
have that the functional can be written
    \begin{equation*}
        J(u)=\frac 12 \irn \phi (|\n
        u|^2) +\irn G_2(u)-\irn G_1(u),
    \end{equation*}
In order to have compactness, we introduce a symmetry requirement on
our space.
\begin{definition}
Let us denote with
\[
\D(\RN)_{{\rm rad}}=\{u\in  \D(\RN)\mid u \hbox{ is radially
symmetric}\},
\]
and let $\mathcal{W}_r$ be the completion of $\D (\RN)_{{\rm rad}}$
in the norm $\|\cdot\|$, namely
\[
 \W_r=\overline{\D(\RN)_{{\rm rad}}}^{\|\cdot\|}.
\]
\end{definition}

\begin{remark}
        Observe that if $1\le \a\le p^*$, the set $\mathcal{W}_r$
        is included in $L^s(\RN)$ for any $s\in[\a,p^*].$\\
        Indeed, take $u\in\mathcal{W}_r,$ and
        consider the set $\Lambda_{\n u}.$
        Since $\|\n u\|_{p,q}<+\infty$, certainly $\n u\in L^p(\Lambda_{\n u}).$
        On the other hand, since $u\in L^\a(\RN),$ we have that $u\in L^\a(\Lambda_{\n
        u}).$ So, if we define $E(\Omega)=\{v\in L^\a(\Omega)\mid\n u\in
        L^{p}(\Omega)\}$, then $u\in E(\Lambda_{\n u}).$\\
        By symmetry of $u$, the set $\Lambda_{\n u}$ has a smooth
        boundary so, by standard arguments (see for example \cite{A}), there exists a
        continuous extension operator $T: E(\Omega)\to E(\R^N).$
        Then embedding theorems hold in the domain $\Lambda_{\n u}$
        so we deduce that $u\in L^s(\Lambda_{\n u})$ for any $s\in
        [\a,p^*]$. Analogously $u\in L^s(\Lambda_{\n u}^c),$ for any
        $s\in[\a,q^*].$ Since $p^*< q^*,$ we conclude.\\
        At the
        present stage of our knowledge, we do not know if, for $p^*\frac{q'}{p'}<\a< p^*$,
        these
        embeddings are also continuous.

    \end{remark}

The following compactness result holds.

\begin{theorem}
\label{th:compact} If $1<p<q<N$ and $1< \a\le p^*\frac{q'}{p'},$
then the functionals
    \begin{align*}
        u\in\mathcal W_r&\mapsto \irn G_1(u)\\
        u\in\mathcal W_r&\mapsto \irn g_1(u)u
    \end{align*}
are weakly continuous.
\end{theorem}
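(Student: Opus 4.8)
The plan is to run a Strauss-type compactness argument. Since the two functionals are treated in the same way, I would work with a generic $P\in\{G_1,\ s\mapsto g_1(s)s\}$: in both cases $P$ is continuous, vanishes on $(-\infty,0]$, and by (iii) (directly for $s\mapsto g_1(s)s$, and after integration for $G_1$) satisfies $P(s)/s^{\a}\to0$ as $s\to0^+$ and $P(s)/s^{p^*}\to0$ as $s\to+\infty$. Let $u_n\weakto u$ in $\mathcal W_r$. By Theorem \ref{th:immersione} and the interpolation remark $(u_n)$ is bounded in $L^{\a}(\RN)\cap L^{p^*}(\RN)$; writing $Q(t):=|t|^{\a}$ if $|t|\le1$ and $Q(t):=|t|^{p^*}$ if $|t|>1$, there is $C>0$ with $\irn Q(u_n)\le C$ for all $n$, and, $Q$ being continuous and nonnegative, Fatou gives $\irn Q(u)\le C$. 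The goal is to show $\irn P(u_n)\to\irn P(u)$, which is the asserted weak continuity.

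Step 1 (local compactness, hence a.e.\ convergence). For every $R>0$ the restriction map $\mathcal W_r\to W^{1,p}(B_R)$ is bounded: the $L^p(B_R)$-norm is controlled by $\|\cdot\|_{p^*}$, hence by $\|\cdot\|$, through Theorem \ref{th:immersione}, while $\|\n u\|_{L^p(B_R)}\le c_R\|\n u\|_{p,q}$ because on the finite-measure set $B_R$ any admissible splitting $\n u=v+w$ ($v\in L^p$, $w\in L^q$) gives $\|\n u\|_{L^p(B_R)}\le\|v\|_p+|B_R|^{\frac1p-\frac1q}\|w\|_q$. Since $p<N$, Rellich--Kondrachov together with a diagonal argument over $R\in\N$ produce a subsequence (not relabelled) with $u_n\to u$ a.e.\ in $\RN$, hence $P(u_n)\to P(u)$ a.e.

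Step 2 (uniform decay at infinity). This is where the radial symmetry enters: there is $c>0$ such that $|v(x)|\le c\|v\|\,|x|^{-\g}$ for $v\in\mathcal W_r$ and $|x|\ge1$, with $\g:=(N-q)/q>0$ (here $q<N$ is used). I would quote this from \cite{ADP,BPR}, or prove it directly: for $v\in\D(\RN)_{{\rm rad}}$ write $v(r)=-\int_r^{\infty}v'(s)\,ds$, split $v'$ according to whether $|v'|\le1$ or $|v'|>1$, and apply H\"older in the measure $s^{N-1}\,ds$ to each piece — the piece with $|v'|\le1$ lies in $L^q$, the piece with $|v'|>1$ in $L^p$ on a set of finite radial measure (Proposition \ref{pr:lplq}\ref{en:lu} and \ref{en:ulu}), and $q<N$ makes the resulting weight integrable — then pass to the limit by density of $\D(\RN)_{{\rm rad}}$. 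With $M_0:=\sup_n\|u_n\|$ and $A(x):=cM_0|x|^{-\g}$ we then have $|u_n(x)|\le A(x)$ for $|x|\ge1$, uniformly in $n$, and the same bound for $u$, while $A(x)\to0$ as $|x|\to\infty$.

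Step 3 (conclusion). Fix $\eps>0$ and choose $0<\d\le1\le M$ and $B_\eps<\infty$ with $|P(t)|\le\eps Q(t)$ whenever $|t|\le\d$ or $|t|\ge M$, and $|P(t)|\le B_\eps$ for $\d\le|t|\le M$. Pick $R\ge1$ with $A\le\d$ on $\{|x|\ge R\}$; then $|u_n|,|u|\le\d$ there, so $\int_{|x|\ge R}|P(u_n)-P(u)|\le\eps\int_{|x|\ge R}(Q(u_n)+Q(u))\le2\eps C$. On $B_R$, put $W_n:=\left(|P(u_n)-P(u)|-\eps Q(u_n)\right)^+$: a.e.\ convergence gives $W_n\to0$ a.e., while $0\le W_n\le B_\eps+|P(u)|$ with $|P(u)|\le\eps Q(u)+B_\eps\chi_{\{\d\le|u|\le M\}}\in L^1(B_R)$, so dominated convergence yields $\int_{B_R}W_n\to0$; since $|P(u_n)-P(u)|\le W_n+\eps Q(u_n)$ this gives $\limsup_n\int_{B_R}|P(u_n)-P(u)|\le\eps C$. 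Adding the two estimates, $\limsup_n\irn|P(u_n)-P(u)|\le3\eps C$, and letting $\eps\downarrow0$ we get $\irn P(u_n)\to\irn P(u)$ along the subsequence, hence (by the usual subsequence argument, the limit being sequence-independent) along the full sequence. The step I expect to be the real difficulty is Step 2: without the radial decay/tail control the $L^{p^*}$-mass of $u_n$ could escape to infinity and the functionals would fail to be weakly continuous — this is exactly why one works on $\mathcal W_r$ rather than on $\mathcal W$.
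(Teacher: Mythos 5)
Your proposal is correct and follows essentially the same route as the paper: a.e.\ convergence from compactness, the uniform radial decay estimate \eqref{eq:unifdec}, and control of $P\in\{G_1,\,s\mapsto g_1(s)s\}$ by $Q(t)\sim|t|^{\a}+|t|^{p^*}$ near $0$ and $\infty$ via property (iii), which is exactly the Strauss compactness lemma the paper invokes from \cite{BL1}. The only difference is that you write out the Strauss argument and the local Rellich step in full where the paper cites them.
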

\begin{proof}
    We prove the weak continuity of the first functional. By Lemma
    2.13 in \cite{ADP}, for any $u\in\mathcal W_r,$
        \begin{equation}\label{eq:unifdec}
            | u(x) | \le \frac{C}{|x|^\frac{N-q}{q}}\| \n u
            \|_{p,q}, \quad\hbox{for }|x|\ge 1.
        \end{equation}
    Now, consider a sequence $(u_n)_n$ in $\mathcal W_r$ weakly
    convergent to $u_0.$ By Theorem 2.11 in \cite{ADP}, $(u_n)_n$
    possesses a subsequence strongly convergent to $u_0$ in $L^{\tau}(\RN)$ for any
    $\tau\in ]\a,p^*[$. So, up to subsequences, we can assume that
    $(u_n)_n$ converges almost everywhere to $u_0.$\\
    Set $P(t)=G_1(t),$ $Q(t)=|t|^\a+|t|^{p^*},$ $v=G_1(u_0).$ By
    property
    (iii) of the function $g_1$, remark 1.7 and \eqref{eq:unifdec},
    we can apply the
    Strauss compactness Lemma in the version as it appears in
    \cite{BL1} and conclude. In a similar way we prove the rest of
    the statement.

\end{proof}

\section{A weak Palais-Smale condition}\label{sec2}

As it is well known, the Palais-Smale condition is a compactness
property related to a functional defined on a Banach space.\\
It states as follows: let $I:E\to\R$ be a $C^1$ functional on the Banach space $E$ and $c\in\R.$
If for any given $(x_n)_n$ in $E$ such that $I(x_n)\to c$ and $I'(x_n)\to 0$ there exists a converging subsequence of $(x_n)_n,$ we say that $I$ satisfies the Palais Smale condition at the level $c.$\\
 Usually,
in the calculus of variation, testing the Palais Smale condition
consists in two steps: first we check if every Palais Smale sequence (namely a sequence verifying the previous assumptions)
is bounded, second we handle with compact embedding theorems to
prove strong convergence (up to a subsequence) in the Banach space. Many times it happens
that the main problem in verifying Palais-Smale condition is related
with the first step. In such cases, one tries to prove that the functional satisfies at least a weakened version of
the Palais-Smale condition, and look for the existence of at least one sequence to which that condition can be applied.\\
In this direction, we recall, for example, the well-known Cerami version of the Palais-Smale condition (see \cite{Cer}),
and the problem in \cite{BBF} where this condition is applied.\\
Here we introduce a new version of a weakened Palais-Smale
condition.
    \begin{definition}\label{def}
        Suppose that $(E,\|\cdot\|_E)$ and
        $(F,\|\cdot\|_F)$ are two Banach spaces such that $(E,\|\cdot\|_E)\hookrightarrow (F,\|\cdot\|_F)$.
        A functional $I\in C^1(E,\R)$ satisfies a weak Palais-Smale condition with respect to $E$ and $F$
        if for any sequence $(x_n)_n$ in $E$  such that
            \begin{enumerate}
                \item $I(x_n)$ is bounded,
                \item $I'(x_n)\to 0$ in $E',$
                \item $(\|x_n\|_F)_n$ is bounded,
            \end{enumerate}
        there exists a converging subsequence (in the topology of
        $E$).
    \end{definition}

    \begin{remark}\label{rem:bl}
        Consider the functional of the action related with the equation
$$-\Delta u=g(u)$$
where $g$ is as in \cite{BL1}. After having produced a suitable
modification of the function $g$, we can see that finite energy
solutions of the equation are critical points of
$$I(u)=\frac 1 2\irn |\n u|^2-\irn G(u),\quad u\in H^1_r(\RN),$$
being $G$ a primitive of $g.$ The properties on $g_1$ and $g_2$
listed in $(i)\ldots (v)$ hold, except that we
have to replace $\alpha$ with $2$ and $p^*$ with $2^*.$\\
We show that $I$ verifies a weak Palais-Smale condition  with
respect to $H^1_r(\RN)$ and $\D^{1,2}_r(\RN)$, each one provided
with its natural norm. Indeed suppose $(u_n)_n$ is a Palais Smale
sequence for which $(\|\n u\|_2)_n$ is bounded. Since $I(u_n)$ is
bounded, for a certain $M>0$ and any $\eps \in ]0,1[$ there exists a
suitable $C_\eps>0$ for which we have
\begin{align*}
    \frac 12 \irn |\n u_n|^2+\irn G_2(u_n)&\le\irn G_1(u_n) +M\\
    &\le \eps \irn G_2(u_n) +C_\eps\irn |u_n|^{2^*}+M\\
    &\le \eps \irn G_2(u_n) +C_SC_\eps\left(\irn |\n u_n|^2\right)^{\frac{2^*}{2}}+M
\end{align*}
where $C_S^{\frac 1 {2^*}}$ is the Sobolev constant for the embedding $\D^{1,2}(\RN)\hookrightarrow L^{2^*}(\RN).$ \\
Then
    \begin{equation*}
        \frac 12 \irn |\n u_n|^2+(1-\eps)\irn G_2(u_n)\le C_SC_\eps\left(\irn |\n u_n|^2\right)^{\frac{2^*}{2}}+M
    \end{equation*}
and, since $(\|\n u_n\|_2)_n$ is bounded, we conclude that $(\|u_n\|_2)_n$ is also bounded since we have
    \begin{align*}
        a(1-\eps)\irn |u_n|^2&\le \frac 12 \irn |\n u_n|^2+(1-\eps)\irn G_2(u_n)\\
        &\le C_SC_\eps\left(\irn |\n u_n|^2\right)^{\frac{2^*}{2}}+M.
    \end{align*}
At this point the arguments are quite standard: we extract a weakly
convergent (in $H^1$-norm) subsequence and we use radial symmetry of
functions in our space and a Strauss compactness lemma to find a
strong convergent sequence.
    \end{remark}

Set
    \begin{equation}\label{eq:D1pq}
        {\mathcal Y}_r=\overline{\D(\RN)_{{\rm rad}}}^{\|\n
        \cdot\|_{p,q}}.
    \end{equation}
We have the following result
    \begin{theorem}\label{le:ps}
        Under the assumptions of Theorem \ref{main}, the functional
        $J$ satisfies a weak Palais-Smale condition, with respect
        to ${\mathcal W}_r$ and ${\mathcal Y}_r$.
    \end{theorem}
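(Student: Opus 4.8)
The plan is to check the three-premise weak Palais--Smale condition of Definition~\ref{def} with $E=\mathcal{W}_r$ and $F=\mathcal{Y}_r$ directly. So fix a sequence $(u_n)_n\subset\mathcal{W}_r$ with $(J(u_n))_n$ bounded, $J'(u_n)\to0$ in $\mathcal{W}_r'$, and $\sup_n\|\nabla u_n\|_{p,q}=:D_0<\infty$. The proof splits into two parts: (A) showing that $(\|u_n\|_\alpha)_n$ is bounded, so that $(u_n)_n$ is bounded in the reflexive space $\mathcal{W}_r$ (Proposition~\ref{pr:rifle}); and (B) extracting from a bounded sequence a strongly convergent subsequence in $\mathcal{W}_r$. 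Part~(A) is the analogue of the computation in Remark~\ref{rem:bl}, the essential difference being that here $\|u_n\|_{p^*}$ cannot be estimated through $\|\nabla u_n\|_{p,q}$ alone (Remark~\ref{rem:emb}), so one is forced to couple $\|u_n\|_{p^*}$ and $\|u_n\|_\alpha$ via inequality~\eqref{eq:brez2}; this is exactly where the restriction $\alpha\ge p^*N'/p'$ enters. Only part~(B) uses $J'(u_n)\to0$.

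For part~(A) I write $J(u)=\frac12\irn\phi(|\nabla u|^2)+\irn G_2(u)-\irn G_1(u)$ and use $\phi\ge0$ (by \ref{phi2}) together with the integrated form of property~(v), $G_1(z)\le\eps G_2(z)+C_\eps|z|^{p^*}$, to turn the bound $J(u_n)\le M$ into
\[
\frac12\irn\phi(|\nabla u_n|^2)+(1-\eps)\irn G_2(u_n)\le C_\eps\|u_n\|_{p^*}^{p^*}+M.
\]
Since property~(iv) gives $\irn G_2(u_n)\ge\frac{a}{\alpha}\|u_n\|_\alpha^\alpha$, fixing $\eps=\frac12$ this yields $\|u_n\|_\alpha^\alpha\le C(\|u_n\|_{p^*}^{p^*}+1)$. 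On the other hand, \eqref{eq:brez2} with $\|\nabla u_n\|_{p,q}\le D_0$ gives $\|u_n\|_{p^*}^{t}\le CD_0(\|u_n\|_{p^*}^{t-1}+\|u_n\|_\alpha^{t-1})$, and separating the cases $\|u_n\|_{p^*}\le\|u_n\|_\alpha$ and $\|u_n\|_{p^*}>\|u_n\|_\alpha$ this reduces to $\|u_n\|_{p^*}^{t}\le C(1+\|u_n\|_\alpha^{t-1})$. Combining the two estimates (after taking roots),
\[
\|u_n\|_{p^*}^{t}\le C\bigl(1+\|u_n\|_{p^*}^{\,p^*(t-1)/\alpha}\bigr).
\]
Here is the crucial point: the relation $\frac{t}{t-1}=\frac{p(N-1)}{N(p-1)}$ is the same as $\frac{t-1}{t}=\frac{N'}{p'}$, so the hypothesis $\alpha\ge p^*\frac{N'}{p'}$ is precisely the statement that $\frac{p^*(t-1)}{\alpha}\le t$; when this inequality is strict the last display forces $(\|u_n\|_{p^*})_n$ to be bounded, whence $(\|u_n\|_\alpha)_n$ is bounded and $(u_n)_n$ is bounded in $\mathcal{W}_r$. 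The borderline $\alpha=p^*N'/p'$ is the most delicate point, since then the two exponents coincide and one has to balance carefully the constants coming from property~(v) (the interplay of $C_\eps$ with $\eps$) and from \eqref{eq:brez2}. This step is the main obstacle.

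For part~(B), a bounded sequence in the reflexive space $\mathcal{W}_r$ has, up to a subsequence, $u_n\weakto u_0$ in $\mathcal{W}_r$; by the compact embedding of $\mathcal{W}_r$ into $L^\tau(\RN)$ for $\tau\in(\alpha,p^*)$ (Theorem~2.11 of \cite{ADP}) and the uniform decay \eqref{eq:unifdec}, $u_n\to u_0$ in every such $L^\tau$ and a.e. From $J'(u_n)\to0$ and boundedness of $(u_n-u_0)_n$ one gets $\langle J'(u_n),u_n-u_0\rangle\to0$, while $\langle J'(u_0),u_n-u_0\rangle\to0$ by weak convergence; subtracting, the sum of $\irn\bigl(\phi'(|\nabla u_n|^2)\nabla u_n-\phi'(|\nabla u_0|^2)\nabla u_0\bigr)\cdot\nabla(u_n-u_0)$, of $\irn\bigl(g_2(u_n)-g_2(u_0)\bigr)(u_n-u_0)$ and of $-\irn\bigl(g_1(u_n)-g_1(u_0)\bigr)(u_n-u_0)$ tends to $0$. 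The two lower order integrals vanish in the limit: the one with $g_1$ because $u\mapsto\irn g_1(u)u$ is weakly continuous (Theorem~\ref{th:compact}) and the mixed integrals $\irn g_1(u_n)u_0$, $\irn g_1(u_0)u_n$ converge by strong $L^\tau$ convergence and the subcritical growth of $g_1$; the one with $g_2$ in the same way, using $g_2(t)\le C(t^{\alpha-1}+t^{p^*-1})$ with $g_2(t)=o(t^{p^*-1})$ and the $L^{p^*}$ bound to handle the tails. One is then left with $\irn\bigl(\phi'(|\nabla u_n|^2)\nabla u_n-\phi'(|\nabla u_0|^2)\nabla u_0\bigr)\cdot\nabla(u_n-u_0)\to0$; since \ref{phi4} makes $\xi\mapsto\phi(|\xi|^2)$ strictly convex on $\RN$, the integrand is nonnegative, hence tends to $0$ a.e., so $\nabla u_n\to\nabla u_0$ a.e., and a standard uniform convexity / Brezis--Lieb argument (as in \cite{BPR,ADP}) together with $u_n\to u_0$ in $L^\alpha$ (extracted from the $g_2$ term) upgrades this to $u_n\to u_0$ in $\mathcal{W}_r$.
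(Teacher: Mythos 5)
Your part (A) is essentially the paper's own boundedness argument: the same chain from $J(u_n)\le M$ to a bound on $\irn G_2(u_n)$ by $C_\eps\|u_n\|_{p^*}^{p^*}+M$, coupled with \eqref{eq:brez2} and premise 3 of Definition \ref{def} to close the loop into $\|u_n\|_\a^\a\le C\|u_n\|_\a^{N'p^*/p'}$, and the same identification of where $\a\ge p^*N'/p'$ enters via $\frac{t-1}{t}=\frac{N'}{p'}$. (Note that the paper's proof also only invokes the strict inequality $\a>N'p^*/p'$ at this step, so the borderline case you flag as unresolved is left equally unresolved there; this is not a defect of your write-up relative to the paper.)

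Part (B), however, contains a genuine gap. You dispose of $\irn\bigl(g_2(u_n)-g_2(u_0)\bigr)(u_n-u_0)$ ``in the same way'' as the $g_1$ term, but the two terms are not symmetric: Theorem \ref{th:compact} (the Strauss lemma) applies to $g_1$ precisely because $g_1(t)=o(t^{\a-1})$ at $0$ and $o(t^{p^*-1})$ at infinity, whereas $g_2(t)\ge a t^{\a-1}$ by property (iv), so $\irn g_2(u_n)u_n$ controls $\|u_n\|_\a^\a$ and the relevant embedding $\mathcal W_r\hookrightarrow L^\a(\RN)$ is exactly the one for which no compactness is available (strong convergence holds only in $L^\tau$ for $\tau\in(\a,p^*)$). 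Showing that this term vanishes is essentially equivalent to showing $\|u_n\|_\a\to\|u_0\|_\a$, which is the very thing still to be proved --- and indeed your final sentence claims the $L^\a$ convergence is ``extracted from the $g_2$ term,'' contradicting your earlier assertion that the term vanishes. The paper's order of operations is forced by this: it first proves convergence of the convex part $A_1(u)=\frac 12\irn\phi(|\n u|^2)$ (via $A_1(u_n)\le A_1(u_0)+A_1'(u_n)[u_n-u_0]$, the convergence $A_1'(u_n)\to B'(u_0)-A_2'(u_0)$ in $\mathcal W_r'$, and weak lower semicontinuity), deduces $\n u_n\to\n u_0$ in $L^p(\RN)+L^q(\RN)$, and only then uses the equation $J'(u_n)[u_n]\to 0$ together with the weak continuity of $u\mapsto\irn g_1(u)u$ to obtain $\irn g_2(u_n)u_n\to\irn g_2(u_0)u_0$ and hence $u_n\to u_0$ in $L^\a(\RN)$. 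Your alternative plan --- isolate the monotone gradient term, get a.e.\ convergence of gradients, then upgrade --- cannot start until the $g_2$ term has been handled, so as written the argument is circular at this point and needs to be restructured along the paper's lines.
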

    \begin{proof}
        Suppose $(u_n)_n$ is a sequence of functions in ${\mathcal
        W}_r$ such that $1, 2$ and $3$ of definition \ref{def}
        hold.\\
        We first prove that the sequence is bounded.
        By computations analogous to those in remark \ref{rem:bl}, there exist $M>0$ and $C_1> 0,$ such that
        \begin{align}\label{eq:bound}
    \frac 12 \irn \phi(|\n u_n|^2)+(1-\eps)\irn G_2(u_n)&\le C_1\irn
    |u_n|^{p^*}+M.
        \end{align}
    Now, if $(u_n)_n$ is bounded in the $L^{p^*}-$norm, we have
    concluded.\\
    By \eqref{eq:brez2} and 3 of definition
        \ref{def},
            \begin{equation}\label{eq:estim}
                \|u_n\|_{p^*}^t \le C_2 \left(
            \|u_n\|_{p^*}^{t-1} + \|u_n\|_\a^{t-1} \right)
            \end{equation}
        for some $C_2 >0.$
    Suppose that $(\|u_n\|_{p^*})_n$ diverges (up to a subsequence). Then, by \eqref{eq:estim},
     certainly there exists a constant $C$ such that, definitely,
        \begin{equation}\label{eq:estim2}
            \|u_n\|_{p^*}\le C\|u_n\|_{\a}^{\frac{t-1}{t}}.
        \end{equation}
    Comparing \eqref{eq:bound} and \eqref{eq:estim2},
    taking into account that $\frac t{t-1}=\frac{p'}{N'}$ and $a\|u_n\|_{\a}^{\a}\le\irn G_2(u_n),$
    we have, for some positive constant $C$,
        \begin{equation*}
            \|u_n\|_{\a}^{\a}\le C\|u_n\|_{\a}^{\frac{N'p^*}{p'}}
        \end{equation*}
    and then, since $\a>\frac{N'p^*}{p'},$ the sequence $(\|u_n\|_\a)_n$ is bounded.\\
    Therefore, by Proposition \ref{pr:rifle} and Theorem \ref{th:compact}, there exists $u_0\in \W_r$ such that, up to subsequences,
\begin{align}
&u_n\rightharpoonup u_0, \quad \hbox{weakly in
}\W_r,\label{eq:debole}
\\
&\irn G_1(u_n) \to \irn G_1(u_0),  \label{eq:forte}
\\
&\irn g_1(u_n)u_n\to\irn g_1(u_0)u_0,
\end{align}
and, by \cite[Theorem 2.11]{ADP},
    $$u_n \to u_0, \quad \hbox{a.e. in }\RN.$$
    From this point till the end, the proof follows the scheme of Proposition 3.3
    in \cite{ADP} and of Proposition 2 in \cite{BL2}, step $9.1 c$ (see also \cite[Lemma 3.5]{APindiana}).
    We point out only the key passages.
By (\ref{eq:debole}) and arguing as in \cite[page 208]{LL}, we have
\begin{align}
&\n u_n\rightharpoonup \n u_0, \quad \hbox{weakly in
}L^p(\RN)+L^q(\RN), \label{eq:debolelpq}
\\
&u_n\rightharpoonup u_0, \quad \hbox{weakly in }L^\a(\RN).
\label{eq:debolel2}
\end{align}
As in \cite{APindiana} we prove that for any $z\in C^\infty_0(\RN),$
we have
    \begin{equation}
        \irn g_i(u_n)z\to\irn g_i(u_0)z\quad
        i=1,2.\label{eq:compdual}
    \end{equation}
    Set
        \[
            A_1(u)=\frac 12 \irn \phi(|\n u|^2), \quad A_2(u)= \irn
            G_2(u),\quad B(u)=\irn G_1(u).
        \]
    %We write  $A(u)=A_1(u)+A_2(u).$
    By \eqref{eq:compdual}, and since $(u_n)_n$ is bounded in $\mathcal W_r$,
    $J'(u_n)\to 0$ in $\mathcal W_r'$ implies that $A_1'(u_n)\to B'(u_0)-A_2'(u_0)$ in $\mathcal
    W_r'.$
    By convexity, we have
        \begin{align*}
            A_1(u_n) &\le A_1(u_0)+A_1'(u_n)[u_n-u_0]
        \end{align*}
    and then, passing to the limit,
        \begin{equation*}
            \limsup_n A_1(u_n) \le A_1(u_0).
        \end{equation*}
    Since, by weak lower semicontinuity of $A_1$ we also have
    \[
        A_1(u_0)\le \liminf_n A_1(u_n),
    \]
    we conclude that
    \begin{equation}\label{eq:convpq}
        \lim_n A_1(u_n)=A_1(u_0).
    \end{equation}
    By (\ref{eq:debolelpq}) and (\ref{eq:convpq}), we can deduce (see \cite{DS})
        \begin{equation*}%\label{eq:fortelpq}
            \n u_n \to \n u_0,   \quad \hbox{in }L^p(\RN)+L^q(\RN).
        \end{equation*}
    Moreover, since
        \begin{align*}
            \lim_n\irn g_2(u_n)u_n&=\lim_n \left(\irn g_1(u_n)u_n-\irn \phi'(|\n u_n|^2)|\n
            u_n|^2\right)\\
            &= \irn g_1(u_0)u_0 - \irn \phi'(|\n u_0|^2)|\n
            u_0|^2\\
            &= \irn g_2(u_0)u_0,
        \end{align*}
    we are able also to prove that $u_n\to u_0$ in $L^\a(\RN)$ and
    we conclude.
    \end{proof}

\section{Proof of the main Theorem}\label{sec3}

    In view of Lemma \ref{le:ps}, we have just to find a level for which we can
    find a Palais-Smale sequence satisfying the boundedness assumption 3 in the Definition
    \ref{def}.\\
    \begin{lemma}\label{le:gamma}
        The set
        \begin{equation*}%\label{eq:gamma}
            \Gamma:=\{\gamma\in C([0,1],\mathcal W_r)\mid \gamma(0)=0,
            I(\gamma(1))< 0\}
        \end{equation*}
    is nonempty.
    \end{lemma}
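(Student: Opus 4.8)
The plan is to exhibit, by the classical Berestycki--Lions rescaling device, an explicit path in $\mathcal{W}_r$ running from the origin to a function of negative energy. Since $\phi(0)=0$ by \ref{phi1} and $G(0)=0$, one has $J(0)=0$, so it is enough to produce a single $w\in\mathcal{W}_r$ with $J(w)<0$ together with a continuous curve joining $0$ to $w$. (As usual after Section~\ref{sec1}, I read (BL) for the modified nonlinearity $g=\tilde g$: there is $\zeta>0$ with $G(\zeta)>0$.) Here $J$ denotes the action functional of the statement.

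First I would build $w$. For $R>1$ pick $w_R\in\D(\RN)_{{\rm rad}}$ with $0\le w_R\le\zeta$, $w_R\equiv\zeta$ on $B_R$, $\supp w_R\subset B_{R+1}$, and $|\n w_R|\le c_0$ with $c_0$ independent of $R$ (a transition layer of fixed width). On $B_R$ the gradient vanishes, while on $B_{R+1}\setminus B_R$ assumption \ref{phi3} bounds $\phi(|\n w_R|^2)$ by a constant; hence $\frac12\irn\phi(|\n w_R|^2)\le c_1R^{N-1}$. Since $G$ is continuous and $0\le w_R\le\zeta$, one gets $\irn G(w_R)=G(\zeta)|B_R|+O(R^{N-1})\ge G(\zeta)|B_1|\,R^{N}-c_2R^{N-1}$. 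Therefore
\[
J(w_R)\le (c_1+c_2)R^{N-1}-G(\zeta)|B_1|\,R^{N},
\]
which tends to $-\infty$ as $R\to+\infty$; fix $R$ so large that $w:=w_R$ satisfies $J(w)<0$. This is just the usual volume-versus-surface balance, and it is exactly where (BL) and $\phi(0)=0$ enter.

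Next I would join $0$ to $w$ by dilations: set $\gamma(t)(x)=w(x/t)$ for $t\in(0,1]$ and $\gamma(0)=0$. For each $t>0$ the function $\gamma(t)$ is smooth, radial and compactly supported, hence $\gamma(t)\in\D(\RN)_{{\rm rad}}\subset\mathcal{W}_r$, and $\gamma(1)=w$; continuity of $t\mapsto\gamma(t)$ on $(0,1]$ in the norm $\|\cdot\|$ is routine by dominated convergence. The only point needing care is continuity at $t=0$. Here one uses the elementary scaling identities $\|\gamma(t)\|_\a=t^{N/\a}\|w\|_\a$, $\|\n\gamma(t)\|_p=t^{(N-p)/p}\|\n w\|_p$, $\|\n\gamma(t)\|_q=t^{(N-q)/q}\|\n w\|_q$, together with $\|\n\gamma(t)\|_{p,q}\le\min\{\|\n\gamma(t)\|_p,\|\n\gamma(t)\|_q\}$: since $p<q<N$ all three exponents are positive, so $\|\gamma(t)\|=\|\gamma(t)\|_\a+\|\n\gamma(t)\|_{p,q}\to0$ as $t\to0^+$. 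Thus $\gamma\in C([0,1],\mathcal{W}_r)$, $\gamma(0)=0$ and $J(\gamma(1))<0$, i.e.\ $\gamma\in\Gamma$, and $\Gamma\neq\emptyset$.

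I do not expect a genuine obstacle here: the whole argument is the standard Berestycki--Lions construction. The one place where the hypotheses (beyond (BL)) actually bite is the continuity of the rescaled path at the degenerate endpoint $t=0$, which forces the estimate on the $L^p(\RN)+L^q(\RN)$ norm of the gradient and is precisely where $q<N$ is used; a minor bookkeeping point is that (BL) is invoked for $\tilde g$, which is harmless since $\tilde g$ agrees with $g$ on the range that determines the geometry of the functional.
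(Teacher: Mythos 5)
Your argument is correct and is essentially the paper's own: both rest on the Berestycki--Lions dilation balance (a gradient cost of lower order, $R^{N-1}$ for you, $l^{N-p}\irn|\n z|^p$ for the paper, against a volume term of full order $R^N$ resp.\ $l^N\irn G(z)$ supplied by (BL)) to produce a function of negative energy, the paper citing \cite{BL1} for the test function with $\irn G(z)>0$ where you build the plateau explicitly. The only inessential difference is that you join $0$ to $w$ by dilations and verify continuity at $t=0$, whereas $\Gamma$ constrains only the endpoints, so the paper simply takes any continuous path from $0$ to $z_l$ (e.g.\ the segment $t\mapsto t\,z_l$), which makes that check unnecessary.
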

    \begin{proof}
        Starting from the function $z\in\mathcal D(\RN)$ for which $\irn G(z)>0$ (the existence of such a
        function is proved in \cite{BL1}), the proof is
        standard. Indeed consider $z_l(\cdot)=z(\cdot/l)$ for a value of
        $l>0$ to be established and compute
                \begin{align*}
                    J(z_l)&\le C_1 \int_{\L_{\n z_l}^c} |\n z_l|^q
                    +  C_2 \int_{\L_{\n z_l}} |\n z_l|^p -\irn G(z_l)\\
                        &\le C \irn |\n z_l|^p -\irn G(z_l)=
                        Cl^{N-p}\irn |\n z|^p-l^N\irn G(z).
                \end{align*}
            We deduce that $J(z_l)<0$ if $l$ is sufficiently large. At
            this point any continuous path connecting 0 with $z_l$
            is in $\G.$
    \end{proof}
Set
    \begin{equation}\label{eq:cl}
        c_{mp}:=\inf_{\gamma\in\Gamma}\max_{t\in[0,1]}
        J(\gamma(t)).
    \end{equation}

    \begin{lemma}\label{le:cmp}
        The level $c_{mp}$ is positive.
    \end{lemma}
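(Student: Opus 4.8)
The plan is to prove that $J$ has the mountain--pass geometry at the origin and then read the conclusion off from it. Concretely, I claim there are $\rho\in(0,1]$ and $\delta>0$ such that
\[
J(u)\ge\delta\quad\text{whenever }\|u\|=\rho,\qquad\text{and}\qquad J(u)>0\quad\text{whenever }0<\|u\|\le\rho .
\]
Granting the claim, note first that $J(0)=0$, since $\phi(0)=0$ by \ref{phi1} and $G_1(0)=G_2(0)=0$. Hence $J\ge 0$ on the whole closed ball $\overline{B_\rho}$, and therefore every $\gamma\in\Gamma$, which satisfies $J(\gamma(1))<0$, must have $\|\gamma(1)\|>\rho$. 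As $t\mapsto\|\gamma(t)\|$ is continuous on $[0,1]$ with $\|\gamma(0)\|=0<\rho<\|\gamma(1)\|$, there is $t_0\in(0,1)$ with $\|\gamma(t_0)\|=\rho$, so $\max_{t\in[0,1]}J(\gamma(t))\ge J(\gamma(t_0))\ge\delta$. Since this holds for every $\gamma\in\Gamma$, we obtain $c_{mp}\ge\delta>0$.

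To prove the claim I would estimate $J$ from below on small balls. By \ref{phi2},
\[
\tfrac12\irn\phi(|\n u|^2)\ \ge\ \tfrac c2\Big(\int_{\Lambda_{\n u}}|\n u|^p+\int_{\Lambda_{\n u}^c}|\n u|^q\Big);
\]
by property (iv) of $g_2$, $\irn G_2(u)\ge\frac a\a\|u\|_\a^\a$; and integrating property (v) with $\eps=\tfrac12$, $\irn G_1(u)\le\tfrac12\irn G_2(u)+C\|u\|_{p^*}^{p^*}$. Recalling that $J(u)=\tfrac12\irn\phi(|\n u|^2)+\irn G_2(u)-\irn G_1(u)$, these combine to
\[
J(u)\ \ge\ \tfrac c2\Big(\int_{\Lambda_{\n u}}|\n u|^p+\int_{\Lambda_{\n u}^c}|\n u|^q\Big)+\tfrac{a}{2\a}\|u\|_\a^\a-C\|u\|_{p^*}^{p^*},
\]
and by Theorem~\ref{th:immersione}, that is by \eqref{eq:ineq}, $\|u\|_{p^*}\le C\|u\|$, so the last term is bounded by $C\|u\|^{p^*}$.

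It remains to bound the two positive terms from below by a power of $\|u\|$ strictly smaller than $p^*$. Fix $\rho'\le\rho\le 1$ and $u$ with $\|u\|=\rho'$, so $\|u\|_\a\le\rho'$ and $\|\n u\|_{p,q}\le\rho'$. If $\|u\|_\a\ge\rho'/2$, then $\|u\|_\a^\a\ge(\rho'/2)^\a\ge c\,\rho'^{\,\max\{q,\a\}}$ (using $\rho'\le1$ and $\a\le\max\{q,\a\}$). If instead $\|\n u\|_{p,q}\ge\rho'/2$, I use the modular--norm estimate for the sum space recalled in Section~\ref{sec1} (see \cite{ADP,BPR}): since $\|\n u\|_{p,q}\le1$,
\[
\int_{\Lambda_{\n u}}|\n u|^p+\int_{\Lambda_{\n u}^c}|\n u|^q\ \ge\ c\,\|\n u\|_{p,q}^q\ \ge\ c\,(\rho'/2)^q\ \ge\ c\,\rho'^{\,\max\{q,\a\}} .
\]
In either case $J(u)\ge c\,\rho'^{\,\max\{q,\a\}}-C\rho'^{\,p^*}$. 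Since $q<p^*$ and, because $p<q$ forces $q'/p'<1$, also $\a\le p^*q'/p'<p^*$, we have $\max\{q,\a\}<p^*$; hence choosing $\rho$ small enough that $C\rho^{\,p^*-\max\{q,\a\}}<c$, the right--hand side is positive for all $\rho'\le\rho$ and is at least $\delta:=\rho^{\max\{q,\a\}}\big(c-C\rho^{\,p^*-\max\{q,\a\}}\big)>0$ when $\rho'=\rho$. This proves the claim, hence the lemma.

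The one genuinely non-routine point is the second case: passing from the ``modular'' lower bound furnished by \ref{phi2} to a lower bound on a power of the Luxemburg norm $\|\n u\|_{p,q}$. This is exactly where the sum--space structure (as opposed to an honest $L^p$ gradient term) must be dealt with, through the small--norm modular inequality for $L^p+L^q$ recalled in Section~\ref{sec1}. Everything else is the standard mountain--pass bookkeeping, using only $\phi(0)=0$, the elementary facts (iv)--(v) about $g_1,g_2$, and the embedding \eqref{eq:ineq}.
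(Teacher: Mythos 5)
Your proof is correct and follows essentially the same route as the paper: both establish the mountain--pass geometry near the origin by combining the lower bound from \ref{phi2}, the bounds (iv)--(v) on $g_1,g_2$, the embedding into $L^{p^*}(\RN)$, and the small--norm comparison between the modular $\int_{\L_{\n u}}|\n u|^p+\int_{\L_{\n u}^c}|\n u|^q$ and $\|\n u\|_{p,q}^q$ coming from Proposition \ref{pr:lplq}\ref{en:lupq}, arriving at the same estimate $J(u)\ge c\|u\|^{\max\{\a,q\}}-C\|u\|^{p^*}$ with $\max\{\a,q\}<p^*$. Your explicit two--case split on which of $\|u\|_\a$ and $\|\n u\|_{p,q}$ carries at least half the norm is just a spelled--out version of the paper's final line, so there is no substantive difference.
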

    \begin{proof}
        Of course it is enough to verify the following geometrical
        mountain pass assumptions: there exist $\d,\rho>0$ such that
            \begin{itemize}
                \item $J(u)\ge \d,\hbox{ for all }u\in \mathcal W_r\hbox{ such that }
                \|u\|=\rho$\\
                \item $J(u)\ge 0,\hbox{ for all }u\in \mathcal W_r\hbox{ such that }
                \|u\|\le\rho.$
            \end{itemize}
By (\ref{phi2}), \ref{en:lupq} of Proposition \ref{pr:lplq} and
since $\W\hookrightarrow L^{p^*}(\RN)$, we have that, if $\|u\|$ is sufficiently small (note that $p<q$)
\begin{align*}
J(u) & \ge c_1 \int_{\L_{\n u}^c} |\n u|^q +  c_2 \int_{\L_{\n u}}
|\n u|^p + (1-\eps) \irn G_2(u) - C_\eps \irn |u|^{p^*}
\\
& \ge c \max \left( \int_{\L_{\n u}^c} |\n u|^q , \int_{\L_{\n u}}
|\n u|^p \right) + c \irn |u|^\a -C_\eps \irn |u|^{p^*}
\\
%& \ge c \min \left(\|\n u\|_{p,q}^q, \|\n u\|_{p,q}^p\right)
%+ \frac 1\a \irn |u|^\a
%-\frac 1s \irn |u|^s
%\\
& \ge c\Big[\|\n u\|_{p,q}^q +\|u\|^\a_{\a} -\|u\|^{p^*}_{p^*}\Big]
\\
& \ge c \Big[\| u\|^{\max\{\a,q\}}  -\|u\|^{p^*} \Big].
\end{align*}
Taking respectively $\|u\|=\rho$ or $\|u\|\le \rho$ with $\rho>0$
sufficiently small we conclude.
    \end{proof}
    We introduce the following auxiliary functional on the space
    $\R\times\mathcal W_r$
        \begin{equation*}
            \widetilde J(\t,u)=\frac{e^{N\t}}{2}\irn \phi(e^{-2\t}|\n
            u|^2)-e^{N\t}\irn G(u).
        \end{equation*}
In analogy with
    $\G$ and $c_{mp},$ we define
        \begin{equation*}
            \widetilde\G=\{\tilde\gamma\in C([0,1],\R\times\mathcal W_r)\mid \tilde\gamma(0)=(0,0),
            \widetilde J(\tilde\gamma(1))< 0\}
        \end{equation*}
    and
        \begin{equation*}
            \widetilde {c}_{mp}:=\inf_{\tilde \g
\in\widetilde\G}\sup_{t\in[0,1]}\widetilde J(\tilde\g(t)).
        \end{equation*}
\begin{proposition}\label{pr:geom}
    The functional $\widetilde J$ verifies the geometrical
    assumptions of the mountain pass theorem, so that $\widetilde
    c_{mp}$ is the mountain pass level. Moreover $c_{mp}=\widetilde
    c_{mp}.$
\end{proposition}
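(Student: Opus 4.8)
The statement has two parts: (a) $\widetilde J$ has the mountain pass geometry on $\R\times\mathcal W_r$, and (b) the two mountain pass levels coincide, $c_{mp}=\widetilde c_{mp}$. For part (a) the key observation is that $\widetilde J$ restricted to the slice $\{\t=0\}$ is exactly $J$, so I would recycle Lemma \ref{le:cmp} to get a local lower bound near the origin, and Lemma \ref{le:gamma} to produce a point of negative energy. More precisely, I would first check the ``valley'' condition: estimating $\phi(e^{-2\t}|\n u|^2)$ from below by \eqref{phi2} and bounding $G$ exactly as in the proof of Lemma \ref{le:cmp}, one sees that for $|\t|\le 1$ (say) and $\|u\|=\rho$ small one still has $\widetilde J(\t,u)\ge\d>0$, because the exponential prefactors $e^{N\t}$ are bounded above and below on this range and do not affect the sign of the leading $\|u\|^{\max\{\a,q\}}-\|u\|^{p^*}$ behaviour. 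For the ``mountains'' condition, I would take the path $z_l$ of Lemma \ref{le:gamma} with $l$ large, for which $J(z_l)=\widetilde J(0,z_l)<0$; since $(0,z_l)$ is joined to $(0,0)$ by a path in $\{\t=0\}\subset\R\times\mathcal W_r$ on which $\widetilde J=J$, the same path witnesses the mountain pass geometry for $\widetilde J$. This also shows $\widetilde\G\supset\{(0,\g(\cdot))\mid \g\in\Gamma\}$, hence immediately $\widetilde c_{mp}\le c_{mp}$.

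The reverse inequality $\widetilde c_{mp}\ge c_{mp}$ is the substantive point, and it is here that the scaling built into $\widetilde J$ is used. Given an arbitrary path $\tilde\g=(\t(\cdot),v(\cdot))\in\widetilde\G$, I would define $\g(t)(x):=v(t)(e^{-\t(t)}x)$; a change of variables $y=e^{-\t(t)}x$ gives
\[
\frac12\irn\phi(|\n\g(t)|^2)=\frac{e^{N\t(t)}}{2}\irn\phi(e^{-2\t(t)}|\n v(t)|^2),\qquad
\irn G(\g(t))=e^{N\t(t)}\irn G(v(t)),
\]
so that $J(\g(t))=\widetilde J(\t(t),v(t))=\widetilde J(\tilde\g(t))$ for every $t$. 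One checks $\g$ is continuous into $\mathcal W_r$ (the dilation acts continuously on $\mathcal W_r$, and $\t$ is continuous), $\g(0)=v(0)(\cdot)=0$ since $v(0)=0$, and $J(\g(1))=\widetilde J(\tilde\g(1))<0$, so $\g\in\Gamma$. Hence $\max_t J(\g(t))=\sup_t\widetilde J(\tilde\g(t))$, which forces $c_{mp}\le\sup_t\widetilde J(\tilde\g(t))$; taking the infimum over $\tilde\g\in\widetilde\G$ yields $c_{mp}\le\widetilde c_{mp}$. Combined with the opposite inequality from the previous paragraph, $c_{mp}=\widetilde c_{mp}$, and since $\widetilde c_{mp}$ is by construction the min--max level for the mountain pass geometry just verified, it is the mountain pass level.

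\textbf{Main obstacle.} The only genuinely delicate point is the continuity of the map $t\mapsto \g(t)=v(t)(e^{-\t(t)}\,\cdot)$ from $[0,1]$ into $(\mathcal W_r,\|\cdot\|)$. One must show that the dilation $u\mapsto u(\lambda\,\cdot)$ is a continuous (indeed locally bounded) operation on $\mathcal W_r$ in the norm $\|\cdot\|=\|\cdot\|_\a+\|\n\cdot\|_{p,q}$, uniformly for $\lambda$ in compact subsets of $(0,\infty)$, and then combine this with the continuity of $v$ and of $\t$. The $L^\a$ part scales homogeneously and is unproblematic; for the $L^p+L^q$ part of the gradient one has to be slightly careful because the sum-norm is not homogeneous, but using \ref{en:ooc} of Proposition \ref{pr:lplq} together with the homogeneity of the individual $L^p$ and $L^q$ norms one gets, for $\lambda$ in a compact set, a two-sided comparison $\|\n(u(\lambda\cdot))\|_{p,q}\le C(\lambda)\|\n u\|_{p,q}$ with $C(\lambda)$ bounded, which is enough. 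This estimate, applied to differences $v(t)-v(t')$ and to the variation in $\lambda=e^{-\t(t)}$ separately, gives the required continuity. Everything else is a routine transcription of Lemmas \ref{le:gamma}--\ref{le:cmp} with harmless exponential factors $e^{N\t}$, $e^{-2\t}$ inserted.
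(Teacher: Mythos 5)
Your proposal is correct and follows essentially the same route as the paper: verify the geometry by the estimates of Lemma \ref{le:cmp} with the harmless exponential prefactors, get $\widetilde c_{mp}\le c_{mp}$ from the embedding $\g\mapsto(0,\g)$, and get the reverse inequality by rescaling $\eta(t)=\g(t)(e^{-\t(t)}\cdot)$ together with the change of variables giving $J\circ\eta=\widetilde J\circ\tilde\g$. You are in fact slightly more careful than the paper, which asserts $\eta\in\G$ without commenting on the continuity of the dilated path in $\mathcal W_r$; your two-sided comparison of the $L^p+L^q$ gradient norm under dilation fills that small gap.
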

    \begin{proof}
        We estimate the functional $\widetilde J.$ Since $\phi$ is
        increasing in $\R_+,$ by similar computations as those in Lemma \ref{le:cmp}, for small $\|u\|$ we have:
            \begin{align}
                \widetilde J(\t,u)&\ge\frac{e^{N\t}}{2}\irn
                \phi(e^{-2|\t|}|\n u|^2)-e^{N\t}\irn G(u)\\
                &\ge c e^{N\t}
                \Big[e^{-q|\t|}\|\n u\|_{p,q}^q +\|u\|^\a_{\a}
                -\|u\|^{p^*}_{p^*}\Big]\\
                &\ge c e^{N\t}\Big[e^{-q|\t|}\| u\|^{\max\{\a,q\}}  -C\|u\|^{p^*}
                \Big].
            \end{align}
        So we deduce that there exists $\d>0$ such that $\widetilde I(\t,u)$ is
        nonnegative if $\sqrt{\t^2+\|u\|^2}\le\d$, and it is
        positive for $\sqrt{\t^2+\|u\|^2}=\d.$\\
        As in Lemma \ref{le:gamma} we can prove the existence of $(\bar\t,\bar
        u)\in \R\times \mathcal W_r$ for which $\widetilde J(\bar\t,\bar
        u)<0.$

        Finally observe that, since for any $\g\in\G$ we have that $\tilde\g=(0,\g)\in\widetilde
    \G$ and $J\circ \g=\widetilde J\circ\tilde \g,$ certainly $\widetilde c_{mp}\le
    c_{mp}.$ Now, suppose $\tilde \g(\cdot)=(\t(\cdot),\g(\cdot)) \in \widetilde\G.$ Then
    if we set $\eta(t)(\cdot)= \g(t)(e^{-\t(t)}\cdot),$ we have that
    $\eta\in\G$ and $J\circ \eta=\widetilde J\circ\tilde \g.$ So, since $c_{mp}\le\widetilde
    c_{mp},$ we conclude that the two values coincide.
    \end{proof}

    Now we are ready to prove the following fundamental result

    \begin{proposition}\label{pr:psseq}
        There exists a sequence $(u_n)_n$ in $\mathcal W_r$
        satisfying 1, 2 and 3 in Definition \ref{def}, being
        $F=\mathcal Y_r,$ $E=\mathcal W_r$ and $I=J$.
    \end{proposition}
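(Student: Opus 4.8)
The plan is to apply the monotonicity trick of Struwe--Jeanjean to the auxiliary functional $\widetilde J$ on the Banach space $\R\times\mathcal W_r$, and then to translate the resulting Palais--Smale sequence for $\widetilde J$ into a sequence in $\mathcal W_r$ satisfying the three conditions of Definition \ref{def}. First I would introduce a one-parameter family of functionals, most naturally $\widetilde J_\lambda(\t,u)=\frac{e^{N\t}}{2}\irn \phi(e^{-2\t}|\n u|^2)+\lambda e^{N\t}\irn G_2(u)-\lambda e^{N\t}\irn G_1(u)$ (or an equivalent scaling of the $G_1$ term), chosen so that the $\lambda$-dependent part is monotone in $\lambda$ and the mountain-pass geometry of Proposition \ref{pr:geom} persists uniformly for $\lambda$ in a neighbourhood of $1$. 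The mountain pass levels $\widetilde c_{mp}(\lambda)$ are then monotone in $\lambda$, hence differentiable for a.e.\ $\lambda$; by Jeanjean's theorem, at each such $\lambda$ there is a bounded Palais--Smale sequence for $\widetilde J_\lambda$ at level $\widetilde c_{mp}(\lambda)$. Taking $\lambda_k\to 1$ and using continuity of $\lambda\mapsto\widetilde c_{mp}(\lambda)$ together with a diagonal argument, I would extract from these a single Palais--Smale sequence $(\t_n,u_n)_n$ for $\widetilde J=\widetilde J_1$ at level $\widetilde c_{mp}=c_{mp}$, bounded in $\R\times\mathcal W_r$.

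The key point is to read off the right information from $\widetilde J'(\t_n,u_n)\to 0$. The partial derivative in the $\t$-direction gives a Pohozaev-type identity asymptotically: $\partial_\t\widetilde J(\t_n,u_n)\to 0$ yields, after dividing by $e^{N\t_n}$, a relation of the form $\frac N2\irn\phi(e^{-2\t_n}|\n u_n|^2)-\irn\phi'(e^{-2\t_n}|\n u_n|^2)e^{-2\t_n}|\n u_n|^2-N\irn G(u_n)\to 0$. Combined with the fact that $\widetilde J(\t_n,u_n)\to c_{mp}$ is bounded and with $(\t_n)_n$ bounded, this extra identity lets me bound $\irn\phi(e^{-2\t_n}|\n u_n|^2)$ and hence, using \eqref{phi2}, control $\|\n u_n\|_{p,q}$ uniformly. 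Setting $v_n(\cdot)=u_n(e^{-\t_n}\cdot)\in\mathcal W_r$ undoes the scaling: a direct computation shows $J(v_n)=\widetilde J(\t_n,u_n)\to c_{mp}$, that $J'(v_n)\to 0$ in $\mathcal W_r'$ follows from the $u$-component of $\widetilde J'(\t_n,u_n)\to 0$ (since the map $u\mapsto u(e^{-\t}\cdot)$ is an isometry-like change of variables whose operator norm is controlled by the bounded $\t_n$), and—this is the crucial gain—$\|\n v_n\|_{p,q}$ is a fixed power of $e^{\t_n}$ times $\|\n u_n\|_{p,q}$, hence bounded, i.e.\ $(\|v_n\|_{\mathcal Y_r})_n$ is bounded. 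Thus $(v_n)_n$ satisfies 1, 2 and 3 of Definition \ref{def}.

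I expect the main obstacle to be the bookkeeping around the non-homogeneous $\phi$ in two places: first, verifying that the mountain-pass geometry for $\widetilde J_\lambda$ holds uniformly in $\lambda$ near $1$ (the estimates in Lemma \ref{le:cmp} and Proposition \ref{pr:geom} must be made quantitative in $\lambda$), and second, extracting a genuine bound on $\|\n u_n\|_{p,q}$ from the asymptotic Pohozaev identity when $\phi'(t)t$ is only comparable to $\phi(t)$ up to the constants in \eqref{phi2}--\eqref{phi3} rather than a fixed multiple—here one must be careful that the quantity $\frac N2\phi(t)-\phi'(t)t$ does not degenerate, using that $p<q<N$ so that $\phi'(t)t\le\frac q2\phi(t)<\frac N2\phi(t)$ in the relevant regime, which gives coercivity of the Pohozaev functional in $\|\n u_n\|_{p,q}$. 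The remaining verifications—continuity of $\widetilde c_{mp}(\lambda)$, the diagonal extraction, and the change-of-variables identities relating $J$ and $\widetilde J$—are routine given Proposition \ref{pr:geom}.
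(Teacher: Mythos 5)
Your second and third paragraphs essentially reproduce the paper's argument: the $\t$-derivative of $\widetilde J$ supplies the asymptotic Pohozaev identity, the combination $N\widetilde J(\t_n,u_n)-\partial_\t\widetilde J(\t_n,u_n)$ isolates $e^{N\t_n}\irn \phi'(e^{-2\t_n}|\n u_n|^2)e^{-2\t_n}|\n u_n|^2\to N\widetilde c_{mp}$, and the rescaling $\tilde u_n=u_n(e^{-\t_n}\cdot)$ converts this into the $\mathcal Y_r$-bound while preserving the level and the smallness of the derivative. But the first layer of your plan, the monotonicity trick applied to a family $\widetilde J_\lambda$, is not what the paper does and contains a genuine gap. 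The paper (following \cite{HIT}) applies Ekeland's variational principle directly to $\widetilde J$ over the class $\widetilde\G$, which already produces a Palais--Smale sequence $(\t_n,u_n)_n$ at level $\widetilde c_{mp}$ with $\t_n\to 0$: the whole point of adding the extra dimension is that the Pohozaev information comes for free, so no approximating family in $\lambda$ is needed. In your version the passage from a.e.\ $\lambda$ back to $\lambda=1$ is not a routine diagonal argument: Jeanjean's theorem yields bounded Palais--Smale sequences only at points of differentiability of $\lambda\mapsto\widetilde c_{mp}(\lambda)$, with bounds depending on the derivative there, hence not uniform as $\lambda_k\to 1$; the standard remedy is to first upgrade each bounded sequence to an actual critical point of $\widetilde J_{\lambda_k}$, which would force you to reprove the compactness of Theorem \ref{le:ps} for every $\widetilde J_{\lambda}$. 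None of this is needed, and the introduction of the paper explains why the monotonicity trick is deliberately avoided here.

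Second, your stated mechanism for extracting the gradient bound is wrong in detail. You invoke $\phi'(t)t\le\frac q2\phi(t)$; this is essentially hypothesis \eqref{phi5}, which Theorem \ref{main} explicitly does \emph{not} assume, and in any case an upper bound on $\phi'(t)t$ points the wrong way for this step. What is actually needed is the \emph{lower} bound $\frac12\phi(t^2)\le\phi'(t^2)t^2$, which follows from \eqref{phi1} and the convexity assumption \eqref{phi4}: once $\irn\phi'(|\n\tilde u_n|^2)|\n\tilde u_n|^2\to N\widetilde c_{mp}$ is known, this inequality bounds $\irn\phi(|\n\tilde u_n|^2)$, and then \eqref{phi2} together with \ref{en:lupq} of Proposition \ref{pr:lplq} gives the bound on $\|\n\tilde u_n\|_{p,q}$, i.e.\ condition 3 of Definition \ref{def}. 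Finally, the boundedness of $(\t_n)_n$ that you use to transfer $J'(\tilde u_n)\to 0$ must be established rather than assumed; in the paper it is part of the output of the Ekeland construction, which gives $\t_n\to 0$ directly.
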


    \begin{proof}
    By standard arguments related with the Ekeland principle, as in \cite{HIT} we can get
    a Palais Smale sequence $(\t_n,u_n)_n$ for
    the functional $\widetilde J$ at the level $\widetilde c_{mp}$
    such that $\t_n\to 0.$\\
    So, from $\widetilde J(\t_n,u_n)\to \widetilde c_{mp},$ $\frac{\partial \widetilde J}{\partial
    u}(\t_n,u_n)\to 0$ in $\mathcal W_r'$ and $\frac{\partial \widetilde J}{\partial
    \t}(\t_n,u_n)\to 0,$ we respectively
    have
        \begin{align}
            &e^{N\t_n}\left(\frac {1}2 \irn \phi(e^{-2\t_n}|\n
            u_n|^2)-\irn G(u_n)\right)\to \widetilde c_{mp}\label{eq:mp}\\
            &e^{N\t_n}\left(\irn\phi'(e^{-2\t_n}|\n u_n|^2)e^{-2\t_n}|\n
            u_n|^2-\irn g(u_n)u_n\right) =o_n(1)\|u_n\|\label{eq:diff}\\
            &e^{N\t_n}\left(\frac N 2 \irn \phi(e^{-2\t_n}|\n
            u_n|^2)\right.\nonumber\\
            &\qquad\qquad\left.-\irn \phi'(e^{-2\t_n}|\n u_n|^2)e^{-2\t_n}|\n
            u_n|^2-N\irn G(u_n)\,dx\right)\to 0\label{eq:poho}
        \end{align}
    Comparing \eqref{eq:mp} with \eqref{eq:poho} we deduce the
    following inequality
        \begin{equation*}
            \frac {e^{N\t_n}}{N} \irn \phi' (e^{-2\t_n}|\n
            u_n|^2)e^{-2\t_n}|\n u_n|^2\to\widetilde c_{mp}
        \end{equation*}
    which is equivalent to
        \begin{equation*}
            \frac 1 N\irn \phi' (|\n
            \tilde u_n|^2)|\n \tilde u_n|^2\to\widetilde c_{mp}
        \end{equation*}
    where $\tilde u_n(\cdot)= u_n(e^{-\t_n}\cdot).$
    By convexity, we know that $0\le \frac 1 2 \phi(t^2)\le\phi'(t^2)t^2,$
    so from the previous inequality we deduce that $\big(\irn \phi(|\n
    \tilde u_n|^2)\big)_n$ is bounded. Assumption \eqref{phi2} and property
    \ref{en:lupq} in Proposition \ref{pr:lplq} imply that
    $(\tilde u_n)_n$ is bounded in $\mathcal Y_r.$ Finally observe that from
    \eqref{eq:mp} we have that $J(\tilde u_n)\to\widetilde c_{mp}$
    and since $\frac{\partial \widetilde J}{\partial
    u}(\t_n,u_n)\to 0,$ we have
        \begin{equation}\label{eq:diff0}
            e^{N\t_n}\Big(\n\cdot\phi'(|\n \tilde u_n|^2)e^{-\t_n}\n\tilde
            u_n +g(\tilde u_n)\Big)\to 0\quad\hbox{in } \mathcal
            W_r'.
        \end{equation}
    Taking into account that $\t_n\to 0,$ from \eqref{eq:diff0} we
    deduce that $J'(\tilde u_n)\to 0$ in $\mathcal W_r'.$ Then $(\tilde u_n)_n$ satisfies 1, 2, and 3 of Definition
    \ref{def}.
    \end{proof}
    We conclude with the proof of our main Theorem

    \begin{proof}[Proof of Theorem \ref{main}]
        Let $(u_n)_n$ be a sequence as in Proposition \ref{pr:psseq}.
        By Theorem \ref{le:ps}, we can extract a subsequence,
        relabeled $(u_n)_n,$ strongly convergent to some
        $u_0\in\mathcal W_r.$ Finally, it is enough to observe
        that, by Lemma \ref{le:cmp} and Proposition \ref{pr:geom},
        $u_0\neq 0.$ Moreover $u_0\ge 0$ by definition of $g_1.$
    \end{proof}

\end{document}